\setlist[enumerate]{nosep}
\definecolor{labelkey}{rgb}{0,0.08,0.45}
\definecolor{refkey}{rgb}{0,0.6,0.0}
\definecolor{Brown}{rgb}{0.45,0.0,0.05}
\definecolor{lime}{rgb}{0.00,0.8,0.0}
\definecolor{lblue}{rgb}{0.5,0.5,0.99}
\def\namedlabel#1#2{\begingroup
   \def\@currentlabel{#2}%
   \label{#1}\endgroup
}
\newcommand{\seppthree}{\setlength{\itemsep}{-3pt}}
\newcommand{\reckti}{$3^*$ monotone}
\newcommand{\J}[1]{\ensuremath{{\operatorname{J}}_%
{#1}}}
\newcommand{\R}[1]{\ensuremath{{\operatorname{R}}_%
{#1}}}
\newcommand{\Pj}[1]{\ensuremath{{\operatorname{P}}_%
{#1}}}
\providecommand{\siff}{\Leftrightarrow}
\newcommand{\nnn}{\ensuremath{{n\in{\mathbb N}}}}
\newcommand{\menge}[2]{\big\{{#1}~\big |~{#2}\big\}}
\newcommand{\To}{\ensuremath{\rightrightarrows}}
\newcommand{\fenv}[1]%
{\ensuremath{\,\overrightarrow{\operatorname{env}}_{#1}}}
\newcommand{\benv}[1]%
{\ensuremath{\,\overleftarrow{\operatorname{env}}_{#1}}}
\newcommand{\scal}[2]{\left\langle{#1},{#2}  \right\rangle}
\newcommand{\RR}{\ensuremath{\mathbb R}}
\newcommand{\dom}{\ensuremath{\operatorname{dom}}}
\newcommand{\inte}{\ensuremath{\operatorname{int}}}
\newcommand{\ran}{\ensuremath{\operatorname{ran}}}
\newcommand{\Id}{\ensuremath{\operatorname{Id}}}
\newcommand{\mdv}[1]{\ensuremath{{\operatorname{v}}_%
{#1}}}
\providecommand{\wtA}{\widetilde{A}}
\providecommand{\wtB}{\widetilde{B}}
\crefname{equation}{}{equations}
\crefname{chapter}{Appendix}{chapters}
\crefname{item}{}{items}
\crefname{enumi}{}{}
\newtheorem{theorem}{Theorem}[section]
\newtheorem{lemma}[theorem]{Lemma}
\newtheorem{corollary}[theorem]{Corollary}
\newtheorem{proposition}[theorem]{Proposition}
\newtheorem{example}[theorem]{Example}
\newtheorem{fact}[theorem]{Fact}
\newtheorem{remark}[theorem]{Remark}
\providecommand{\RA}{\Rightarrow}
\providecommand{\RR}{\mathbb{R}}
\providecommand{\ran}{\operatorname{ran}}
\providecommand{\dom}{\operatorname{dom}}
\providecommand{\gr}{\operatorname{gra}}
\providecommand{\gra}{\operatorname{gra}}
\providecommand{\Id}{\operatorname{{ Id}}}
\providecommand{\To}{\rightrightarrows}
\providecommand{\gr}{\operatorname{gra}}
\providecommand{\ran}{\operatorname{ran}}
\providecommand{\Id}{\operatorname{Id}}
\providecommand{\J}{{ J}}
\providecommand{\R}{{ R}}
\providecommand{\cran}{\overline{\ran}}
\providecommand{\RR}{\mathbb{R}}
\definecolor{myblue}{rgb}{.8, .8, 1}
  \newcommand*\mybluebox[1]{%
    \colorbox{myblue}{\hspace{1em}#1\hspace{1em}}}
\begin{document}

%

\author{
Heinz H.\ Bauschke\thanks{
Mathematics, University
of British Columbia,
Kelowna, B.C.\ V1V~1V7, Canada. E-mail:
\texttt{heinz.bauschke@ubc.ca}.}
~and~ Walaa M.\ Moursi\thanks{
  Department of Electrical Engineering,
  Stanford University,
  350 Serra Mall, Stanford, CA 94305,
  USA.
  E-mail: \texttt{wmoursi@stanford.edu}.}
}

\title{\textsf{
On the minimal displacement vector of 
compositions\\ and convex combinations of nonexpansive mappings
}
}


\date{September 4, 2018}

\maketitle

\begin{abstract}
\noindent
Monotone operators and (firmly) nonexpansive mappings are fundamental
objects in modern analysis and computational optimization. 
Five years ago, it was shown that if finitely many firmly
nonexpansive mappings
have or ``almost have'' fixed points, then the same is true for
compositions and convex combinations.
More recently, sharp information about the minimal displacement
vector of compositions and of convex combinations of firmly nonexpansive
mappings was obtained in terms of
the displacement vectors of the underlying operators. 

Using a new proof technique based on the Brezis--Haraux theorem and 
reflected resolvents, we extend these results
from firmly nonexpansive to general averaged nonexpansive mappings. 
Various examples illustrate the tightness of our results. 
\end{abstract}
{ 
\noindent
{\bfseries 2010 Mathematics Subject Classification:}
{Primary 
47H05, 
47H09; 
Secondary 
47H10, 
90C25. 
}

\noindent {\bfseries Keywords:}
Averaged nonexpansive mapping, 
Brezis--Haraux theorem, 
displacement map, 
maximally monotone operator,
minimal displacement vector, 
nonexpansive mapping,
resolvent. 
}

\section{Introduction}

Throughout, we assume that 
\begin{empheq}[box=\mybluebox]{equation}
\text{$X$ is
a real Hilbert space with inner
product $\scal{\cdot}{\cdot}$ }
\end{empheq}
and induced norm $\|\cdot\|$.
Recall that 
$T\colon X\to X$ is 
\emph{nonexpansive} (i.e., \emph{$1$-Lipschitz continuous)}
if $(\forall (x,y)\in X\times X)$ 
$\|Tx-Ty\|\leq \|x-y\|$ and that it 
is \emph{firmly nonexpansive} 
if $(\forall (x,y)\in X\times X)$ 
$\|Tx-Ty\|^2\leq \scal{x-y}{Tx-Ty}$.
Furthermore, recall that 
a set-valued operator $A\colon X\To X$ is 
\emph{maximally monotone} if it is \emph{monotone}, i.e., 
$\{(x,x^*),(y,y^*)\}\subseteq \gra A
\RA
\scal{x-y}{x^*-y^*}\geq 0$ and if the graph of $A$ cannot be properly
enlarged without destroying monotonicity\footnote{
We shall write $\dom A = \menge{x\in X}{Ax\neq\varnothing}$
for the \emph{domain} of $A$, $\ran A = A(X) = \bigcup_{x\in X}Ax$ for
the \emph{range} of $A$, and 
$\gr A=\menge{(x,u)\in X\times X}{u\in Ax}$ 
for the \emph{graph} of
$A$.}. 
These notions are of central importance in modern optimization;
see, e.g., \cite{BC2017}, \cite{GK}, 
\cite{GR}, and the references therein. 
Maximally monotone operators and firmly nonexpansive mappings
are closely related to each other (see \cite{Minty} and \cite{EckBer}) 
because 
if $A\colon X\To X$ is maximally monotone, then
its \emph{resolvent} 
\begin{empheq}[box=\mybluebox]{equation}
\J{A} = (\Id+A)^{-1}
\end{empheq} 
is firmly nonexpansive, and if $T\colon X\to X$ is firmly nonexpansive,
then $T^{-1}-\Id$ is maximally monotone\footnote{
Here and elsewhere, $\Id$ denotes the \emph{identity} operator on $X$.}.
In a similar vein,
the classes of firmly nonexpansive and simply nonexpansive mappings
are bijectively linked because the \emph{reflected resolvent}
\begin{empheq}[box=\mybluebox]{equation}
\R{A} = 2\J{A}-\Id
\end{empheq} 
is nonexpansive, and every nonexpansive map arises in this way. 

The interest into these operators stems from the fact that
minimizers of convex functions are zeros of maximally monotone operators
which in turn are fixed points of (firmly) nonexpansive mappings. 
For basic background material in fixed point theory and monotone operator
theory, we refer the reader to 
\cite{BC2017},
\cite{Brezis},
\cite{BurIus},
\cite{GK},
\cite{GR}, 
\cite{Rock70},
\cite{Rock98},
\cite{Simons1},
\cite{Simons2},
\cite{Zalinescu},
\cite{Zeidler1},
\cite{Zeidler2a},
and \cite{Zeidler2b}.

However, not every problem has a solution; equivalently, not
every resolvent has a fixed point.
Let us make this concrete by 
assuming that $R\colon X\to X$ is  nonexpansive.
The deviation of $T$ possessing a fixed point is captured by the
notion of the \emph{minimal displacement vector}
which is well defined by\footnote{ Given a nonempty closed convex
subset $C$ of $X$, we denote its \emph{projection mapping} or projector
by $\Pj{C}$.}
\begin{empheq}[box=\mybluebox]{equation}
\mdv{R} =
P_{\overline{\ran}(\Id-R)}(0). 
\end{empheq}
If $\mdv{R}=0$, then either $R$ has a fixed point or
$R$ ``almost'' has a fixed point in the sense that there
is a sequence $(x_n)_\nnn$ in $X$ such that $x_n-Rx_n\to 0$. 

Let us now assume that 
$m\in\{2,3,4,\ldots\}$
and that 
we are given $m$ nonexpansive operators $R_1,\ldots,R_m$ on $X$, 
with corresponding 
minimal displacement vectors 
$\mdv{R_i}$.
A natural question is the following: 
\begin{quotation}
\noindent
{\em 
What can be said about the minimal displacement vector $\mdv{R}$ of
$R$, when $R$ is either a composition or a convex combination of
$R_1,\ldots,R_m$, in terms of the given minimal displacement vectors 
$\mdv{R_1},\ldots,\mdv{R_m}$?
}
\end{quotation}
Five years ago, the authors of \cite{Vic1} proved the
that if 
each $\mdv{R_i}=0$, then so is $\mdv{R}=0$ (in the composition
and the convex combination case) \emph{provided that 
each $R_i$ is firmly nonexpansive} (see also \cite{Bau03} for the
earlier case when each $R_i$ is a projector). 
It is noteworthy that these results have been studied 
fairly recently by Kohlenbach in \cite{KGA17} and \cite{Kohlen17} 
from the viewpoint of \emph{proof mining}. 
In the past year, these results were extended in \cite{Vic2} to derive
bounds on the displacement vector, \emph{but still under the assumption of
firm nonexpansiveness}. 
\begin{quotation}
\noindent
{\em 
In this paper we obtain precise information on
the minimal displacement vector $\mdv{R}$ under the much
less restrictive assumption that each $R_i$ 
is merely averaged (rather than firmly)
nonexpansive. 
}
\end{quotation}

The important class of averaged nonexpansive mappings 
(see the comprehensive
study \cite{Comb04} for more) is much larger
than the class of firmly nonexpansive mappings. Indeed, the former class
is closed under compositions (but not the latter) and every nonexpansive
mapping can be approximated by a sequence of averaged nonexpansive
mappings. The key tool to derive our results is the celebrated
Brezis--Haraux theorem \cite{Br-H}, which is applied in a completely
novel way in this work.

 Our new results, outlined next,  
  massively generalize the results in 
  \cite{Vic1} and \cite{Vic2}
  in various directions:

  \begin{itemize}
    \item[{\bf R1}]
    \namedlabel{D:1}{\bf R1}
    We obtain very
    powerful formulae 
    for the ranges of the 
    displacement mapping of compositions 
    and convex combinations. 
    These formulae precisely describe the 
    closure of the range 
    displacement mapping of compositions 
    and convex combinations
    in terms of the closures of the 
    ranges of the displacement mappings of
    the individual operators (see 
    \eqref{e:aug14iandii} and \eqref{e:aug16i}).

    \item[{\bf R2}]
    \namedlabel{D:2}{\bf R2}
    Regarding the minimal displacement vector 
    of compositions, we relax the assumption that 
    all mappings are firmly nonexpansive to
    all but one map are averaged nonexpansive
    (see Theorem~\ref{s:compo}).

    \item[{\bf R3}]
    \namedlabel{D:3}{\bf R3}
    We show that the conclusion of 
    \ref{D:1} is sharp, by providing a counterexample 
    when more than 
    one map fail to be averaged
    (see Example~\ref{ex:all:but:1:fail}).

    \item[{\bf R4}]
    \namedlabel{D:4}{\bf R4}
    Regarding the minimal displacement vector
    of convex combinations, we relax the assumption that 
    all mappings are firmly nonexpansive to
    them being merely nonexpansive
    (see Theorem~\ref{thm:main:convc}).

    \item[{\bf R5}]
    \namedlabel{D:5}{\bf R5}
    We discuss the attainment of the gap vector 
    of the compositions and the connection to cyclic 
    and noncyclic shifts of the compositions
    (see Proposition~\ref{p:aug15iii}
    and Remark~\ref{rem:De:Pierro}). 
  \end{itemize}

The remainder of this paper is organized as follows.
In Section~\ref{s:aux}, we collect various auxiliary results
which will make the proofs of the main results more structured and pleasant.
We then turn to compositions of two mappings in Section~\ref{s:twocompo}.
The new main results
concerning compositions are presented in
Section~\ref{s:compo} while convex combinations are dealt with in
Section~\ref{s:convco}. 

Finally, 
our notation is standard and follows \cite{BC2017} to which we
also refer for facts not explicitly mentioned here. 

\section{Auxiliary results}
\label{s:aux}

This section contains various results that will aid in the derivation of the main
results in subsequent sections.

\subsection{Resolvents and reflected resolvents}

Let $C\colon X\To X$ be maximally monotone. 
The \emph{inverse resolvent identity} (see, e.g., \cite[(23.17)]{BC2017} or 
\cite[Lemma~12.14]{Rock98})
\begin{equation}
  \label{e:invresid}
\J{C^{-1}} = \Id-\J{C} 
\end{equation}
is fundamental, as is the \emph{Minty parametrization}
(see, e.g., \cite[Remark~23.23(ii)]{BC2017})
\begin{equation}
\label{e:Mintypar}
\gra C = \menge{(\J{C}x,\Id-\J{C}x)}{x\in X}
= \menge{(\J{C}x,\J{C^{-1}}x)}{x\in X}.
\end{equation}
Because the reflected resolvent is 
$  \R{C} = 2\J{C}-\Id$, 
we obtain $\Id-\R{C} = \Id-(2\J{C}-\Id)=2(\Id-\J{C})$
and further not only 
\begin{equation}
\label{e:voulezvous}
2\ran(C)=2\ran(\Id-\J{C})=\ran(\Id-\R{C})
\end{equation}
by using \eqref{e:Mintypar}, but also 
\begin{equation}
\R{C} = \J{C} - \J{C^{-1}}.
\end{equation}
Let $x$ and $y$ be in $X$. 
Then
$\J{-y+C}(x)= \J{C}(x+y)$
and 
$\J{C(\cdot-y)}(x) = y+\J{C}(x-y)$.
Hence
$\R{-y+C}(x)= 2\J{C}(x+y)-x=y+\R{C}(x+y)$
and 
$\R{C(\cdot-y)}(x) = y+\R{C}(x-y)$.
It follows that
$\R{B(\cdot-y)}\R{-y+A}x = y+\R{B}\R{A}(x+y)$
and that 
$ -2y+(x+y)-\R{B}\R{A}(x+y) =x-\R{B(\cdot-y)}\R{-y+A}(x)$.
This yields the useful translation formula
\begin{equation}
  \label{e:aug17ii}
  \ran(\Id-\R{B}\R{A})=2y+\ran(\Id-\R{B(\cdot-y)}\R{-y+A}). 
\end{equation}

\subsection{Averaged nonexpansive mappings}
Let $R  \colon X\to X$ and let $\alpha\in \left[
0,1\right[$.
Recall that
$R$ is $\alpha$-averaged if
$R=(1-\alpha)\Id+\alpha N$,
where $N$ is nonexpansive; 
equivalently, by \cite[Lemma~2.1]{Comb04} 
or \cite[Proposition~4.35]{BC2017}:
\begin{equation}
(\forall x\in X)(\forall y\in X)\quad
\|(\Id-R)x-(\Id-R)y\|^2 
\leq \tfrac{\alpha}{1-\alpha}\big(\|x-y\|^2 - \|Rx-Ry\|^2\big).
\end{equation}
If we don't wish to stress the constant $\alpha$ we refer to $R$ simply
as averaged or averaged nonexpansive. 
We have the following useful result.
\begin{fact}
\label{fact:ave:comp}
Let $m\in\{2,3,\ldots\}$, 
and let $R_1,\ldots,R_m$ be averaged on $X$.
Then $R_m\cdots R_1$ is also averaged.
\end{fact}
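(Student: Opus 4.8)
The goal is to show that a composition of finitely many averaged nonexpansive mappings is again averaged. By an obvious induction on $m$, it suffices to treat the case $m=2$: if $R_1$ is $\alpha_1$-averaged and $R_2$ is $\alpha_2$-averaged, then $R_2R_1$ is $\alpha$-averaged for a suitable $\alpha\in\left[0,1\right[$. First I would record the elementary reductions: each $R_i$ is nonexpansive, hence so is $R_2R_1$, which gives the Lipschitz part for free; what must be extracted is the quantitative ``firmness'' estimate. The cleanest route is to write $R_i=(1-\alpha_i)\Id+\alpha_i N_i$ with $N_i$ nonexpansive and to show directly that $R_2R_1$ can be written as $(1-\alpha)\Id+\alpha N$ with $N$ nonexpansive; equivalently, one may work with the characterization displayed just above the statement, namely that $\|(\Id-R)x-(\Id-R)y\|^2\le\frac{\alpha}{1-\alpha}\bigl(\|x-y\|^2-\|Rx-Ry\|^2\bigr)$, and verify it for $R=R_2R_1$.

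The concrete computation I would carry out is the following. Fix $x,y\in X$ and abbreviate $u=R_1x$, $v=R_1y$. Since $R_1$ is $\alpha_1$-averaged,
\begin{equation}
\|(\Id-R_1)x-(\Id-R_1)y\|^2\le\tfrac{\alpha_1}{1-\alpha_1}\bigl(\|x-y\|^2-\|u-v\|^2\bigr),
\end{equation}
and since $R_2$ is $\alpha_2$-averaged,
\begin{equation}
\|(\Id-R_2)u-(\Id-R_2)v\|^2\le\tfrac{\alpha_2}{1-\alpha_2}\bigl(\|u-v\|^2-\|R_2u-R_2v\|^2\bigr).
\end{equation}
Now $(\Id-R_2R_1)x-(\Id-R_2R_1)y=\bigl[(\Id-R_1)x-(\Id-R_1)y\bigr]+\bigl[(\Id-R_2)u-(\Id-R_2)v\bigr]$, so applying the inequality $\|a+b\|^2\le(1+t)\|a\|^2+(1+t^{-1})\|b\|^2$ (valid for every $t>0$) and substituting the two bounds above, the term $\|u-v\|^2$ appears with a coefficient that can be made to vanish — or at least to be controlled — by choosing $t$ appropriately in terms of $\alpha_1,\alpha_2$. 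After collecting terms one arrives at an estimate of the form $\|(\Id-R_2R_1)x-(\Id-R_2R_1)y\|^2\le c\,\|x-y\|^2+d\,\|R_2R_1x-R_2R_1y\|^2$, and a short algebraic manipulation converts this into the required form $\le\frac{\alpha}{1-\alpha}(\|x-y\|^2-\|R_2R_1x-R_2R_1y\|^2)$ with $\alpha<1$. An alternative, perhaps slicker, packaging is to invoke a known closed-form averagedness constant, e.g.\ $\alpha=\frac{\alpha_1+\alpha_2-2\alpha_1\alpha_2}{1-\alpha_1\alpha_2}$, and simply verify that the displayed inequality holds with this value; but since the statement as phrased does not ask for an explicit constant, it is enough to produce \emph{some} $\alpha\in\left[0,1\right[$.

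The main obstacle is purely bookkeeping: choosing the free parameter $t$ (or equivalently balancing the two convex-combination decompositions) so that the cross term $\|R_1x-R_1y\|^2$ cancels cleanly and the resulting constant $\alpha$ is genuinely strictly less than $1$ rather than merely $\le 1$. This is where one has to be a little careful — a naive choice gives $\alpha$ that could degenerate to $1$ in a limit, so the estimate must be arranged to keep strict inequality for all $\alpha_1,\alpha_2<1$. Once the two-mapping case is in hand, the general case follows by writing $R_m\cdots R_1=R_m\circ(R_{m-1}\cdots R_1)$ and using the inductive hypothesis that $R_{m-1}\cdots R_1$ is averaged, so that the composition of two averaged maps applies. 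For a paper of this kind it is entirely reasonable simply to cite \cite[Proposition~4.44]{BC2017} (or \cite{Comb04}) for this well-known fact and omit the computation, which is presumably what the authors do.
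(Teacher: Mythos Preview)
Your proposal is correct, and you anticipated the paper's approach exactly: the authors simply cite the result (they point to \cite[Proposition~2.5]{CombYam14} rather than \cite{BC2017} or \cite{Comb04}, but the spirit is identical). Your sketched two-operator computation is the standard argument behind that cited proposition and would serve perfectly well if an explicit proof were desired.
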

\begin{proof}
See \cite[Proposition~2.5]{CombYam14}.
\end{proof}

\subsection{Cocoercive operators}

Let $\mu>0$ and let 
$A\colon X\to X$.
Then $A$ is \emph{$\mu$-cocoercive} if
$\mu A$ is firmly nonexpansive, i.e.,
\begin{equation}
(\forall x\in X)(\forall y\in X) \quad
\scal{x-y}{Ax-Ay}\geq \mu\|Ax-Ay\|^2;
\end{equation}
equivalently, 
$A^{-1}$ is $\mu$-strongly monotone, i.e., 
$A^{-1}-\mu\Id$ is monotone
(see \cite[Example~22.7]{BC2017}).

The following result is implicitly contained 
in Moursi and Vandenberghe's 
\cite[Proposition~2.1(iii)]{MV18},
and it extends previous work by Giselsson 
\cite[Proposition~5.3]{Gis17}. 

\begin{proposition}
\label{p:AcocoRaver}
Let
$A\colon X\To X$ be maximally monotone, and let $\mu>0$. 
Then $A$ is $\mu$-cocoercive
if and only if $\R{A}$ is $(1+\mu)^{-1}$-averaged. 
\end{proposition}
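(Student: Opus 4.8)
The plan is to reduce the claimed equivalence to an elementary two-point inequality, by combining the Minty parametrization~\eqref{e:Mintypar} with the quantitative characterization of $\alpha$-averagedness recorded above. Set $\alpha:=(1+\mu)^{-1}$; since $\mu\in\RPP$ we have $\alpha\in\zeroun$, and conveniently $\tfrac{\alpha}{1-\alpha}=\tfrac1\mu$, so that ``$\R{A}$ is $(1+\mu)^{-1}$-averaged'' unwinds exactly to the condition $\|(\Id-\R{A})x-(\Id-\R{A})y\|^2\le\tfrac1\mu\bigl(\|x-y\|^2-\|\R{A}x-\R{A}y\|^2\bigr)$ for all $x,y\in X$.

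Next I would change variables via Minty: for $x\in X$ put $a:=\J{A}x$ and $a^*:=\J{A^{-1}}x=x-a$, and likewise $b,b^*$ from $y$; then $\{(a,a^*),(b,b^*)\}\subseteq\gra A$, and by~\eqref{e:Mintypar}, as $(x,y)$ ranges over $X\times X$ the pair $\bigl((a,a^*),(b,b^*)\bigr)$ ranges over all of $\gra A\times\gra A$. Since $\R{A}x=2\J{A}x-x=a-a^*$, we get $(\Id-\R{A})x-(\Id-\R{A})y=2(a^*-b^*)$, while expanding the two squared norms gives $\|x-y\|^2-\|\R{A}x-\R{A}y\|^2=4\scal{a-b}{a^*-b^*}$.

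Substituting these into the averagedness inequality turns it into $4\|a^*-b^*\|^2\le\tfrac4\mu\scal{a-b}{a^*-b^*}$, that is, $\mu\|a^*-b^*\|^2\le\scal{a-b}{a^*-b^*}$, required for every pair of points of $\gra A$. This is precisely the assertion that $A^{-1}-\mu\Id$ is monotone, equivalently that $A^{-1}$ is $\mu$-strongly monotone, which by \cite[Example~22.7]{BC2017} is equivalent to $A$ being $\mu$-cocoercive. Reading this chain of equivalences in both directions proves the proposition.

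The one step warranting care — and the place I would be most careful — is the surjectivity half of the change of variables: one must invoke the Minty parametrization \emph{in both directions}, so that quantifying over all $x,y\in X$ is genuinely the same as quantifying over all pairs in $\gra A$. As a sanity check, the resulting graph inequality forces $A$ to be single-valued, consistently with $\mu A$ being firmly nonexpansive. Beyond this, only the routine two-point algebra above is needed, so no genuine obstacle remains.
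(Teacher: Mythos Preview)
Your proof is correct and follows essentially the same approach as the paper: both arguments use the Minty parametrization to translate the averagedness inequality for $\R{A}$ into a two-point inequality on $\gra A$, and then identify that inequality with $\mu$-cocoercivity. The only cosmetic difference is that the paper packages the algebra via the identity $4(\scal{v}{u-v}-\mu\|u-v\|^2)=\|u\|^2-\|2v-u\|^2-4\mu\|u-v\|^2$ with $u=x-y$ and $v=\J{A}x-\J{A}y$, whereas you work directly with the Minty coordinates $a,a^*,b,b^*$ and the polarization-type identity $\|p+q\|^2-\|p-q\|^2=4\scal{p}{q}$; these are the same computation.
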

\begin{proof}
It is straightforward to verify that 
\begin{equation}
\label{e:katibday3}
(\forall u\in X)(\forall v\in X) \quad
4\big(\scal{v}{u-v}-\mu\|u-v\|^2\big) 
= \|u\|^2 - \|2v-u\|^2 - 4\mu\|u-v\|^2
\end{equation}
Using the Minty parametrization (see \eqref{e:Mintypar}), we see that
\begin{multline}
\label{e:katibday1}
\text{$A$ is $\mu$-cocoercive}
\siff\\
(\forall x\in X)(\forall y\in X)\;\;
\scal{\J{A}x-\J{A}y}{(x-y)-(\J{A}x-\J{A}y)}
\geq
\mu\|(x-y)-(\J{A}x-\J{A}y)\|^2\siff\\
(\forall x\in X)(\forall y\in X)\;\;
4\big(\scal{\J{A}x-\J{A}y}{(x-y)-(\J{A}x-\J{A}y)}
- \mu\|(x-y)-(\J{A}x-\J{A}y)\|^2\big) \geq 0.
\end{multline}
On the other hand, 
using \cite[Proposition~4.35]{BC2017}, we have
\begin{multline}
\label{e:katibday2}
\text{$\R{A}$ is $(1+\mu)^{-1}$-cocoercive}
\siff\\
(\forall x\in X)(\forall y\in X)\;
\|\R{A}x-\R{A}y\|^2 \leq \|x-y\|^2 - \frac{1-(1+\mu)^{-1}}{(1+\mu)^{-1}}
\|(x-y)-(\R{A}x-\R{A}y)\|^2\siff\\
(\forall x\in X)(\forall y\in X)\;
\|x-y\|^2 - \|2(\J{A}x-\J{A}y)-(x-y)\|^2 -  4\mu \|(x-y)-(\J{A}x-\J{A}y)\|^2\geq 0.
\end{multline}
Now combine \eqref{e:katibday1}, \eqref{e:katibday2},
and \eqref{e:katibday3} with $u=x-y$ and $v=\J{A}x-\J{A}y$. 
\end{proof}

\begin{lemma}
\label{l:B-H}
Let $A$ and $B$ be maximally monotone on $X$.
Suppose that there exists $C$ in $\{A,B\}$ such that
$C\colon X\to X$ is cocoercive. 
Then 
$\cran(A+B) = \overline{\ran A + \ran B}$
and $\inte\ran(A+B) = \inte(\ran A+\ran B)$. 
\end{lemma}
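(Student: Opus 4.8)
The plan is to invoke the Brezis--Haraux theorem, for which we need to verify its hypotheses: that $A+B$ is monotone (automatic, since the sum of monotone operators is monotone) and that $A+B$ is ``rectangular'' (also called $3^*$ monotone). Recall that a monotone operator $T\colon X\To X$ is rectangular if for every $(x,x^*)\in\dom T\times\ran T$ one has $\sup_{(z,z^*)\in\gra T}\scal{x-z}{z^*-x^*}<+\infty$. Once rectangularity of $A+B$ is established, the Brezis--Haraux theorem yields directly that $\cran(A+B)=\overline{\ran A+\ran B}$ and $\inte\ran(A+B)=\inte(\ran A+\ran B)$, which is exactly the assertion.

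First I would recall two standard facts about rectangularity (see, e.g., \cite[Section~25.2 and Proposition~25.16]{BC2017}): (a) every operator that is cocoercive on all of $X$ is rectangular; and (b) if $T_1$ is rectangular and $\dom T_2=X$ (more precisely, if $\dom T_1\subseteq\dom T_2$ and $\ran T_1$ or $\ran T_2$ is handled appropriately), then $T_1+T_2$ is rectangular. Concretely: by hypothesis there is $C\in\{A,B\}$ that is $\mu$-cocoercive for some $\mu>0$; since $\mu C$ is then firmly nonexpansive, $C$ is single-valued with $\dom C=X$, and a short computation shows $C$ is rectangular (indeed, cocoercivity gives $\scal{x-z}{Cz-Cx}\le \scal{x-z}{Cx-Cx}+\|x-z\|\,\|Cz-Cx\|$-type bounds; more cleanly, $C$ rectangular follows from $\scal{x-z}{Cz-Cx}\le \tfrac14\mu^{-1}\|x-z\|^2$ after completing the square in the cocoercivity inequality, which is bounded uniformly in $z$ once $x$ is fixed). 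Write $D$ for the other operator in $\{A,B\}$, so $A+B=C+D$.

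Next I would verify that $C+D$ is rectangular. Since $\dom C=X$, we have $\dom(C+D)=\dom D$ and $\ran(C+D)\subseteq \ran C+\ran D=X+\ran D$, but the key point is the domain inclusion: fix $(x,x^*)\in\dom(C+D)\times\ran(C+D)$ and write $x^*=u^*+v^*$ with $u^*\in Cx'$, $v^*\in Dx'$ for some $x'$. For arbitrary $(z,z^*)\in\gra(C+D)$, write $z^*=Cz+w^*$ with $w^*\in Dz$; then
\begin{equation*}
\scal{x-z}{z^*-x^*}=\scal{x-z}{Cz-u^*}+\scal{x-z}{w^*-v^*}.
\end{equation*}
The second summand is $\le 0$ by monotonicity of $D$ together with the fact that $(x',v^*)\in\gra D$... wait --- it is $\scal{x-z}{w^*-v^*}$, not $\scal{x'-z}{w^*-v^*}$; here I would instead use rectangularity of $C$ on the first summand (noting $(x,u^*)$ need not lie in $\gra C$, so more care is needed) --- the cleanest route is to apply \cite[Proposition~24.16 and Corollary~25.27(i)]{BC2017} or the analogous statement in \cite{Br-H}: if $C$ is rectangular with $\dom C=X$ and $D$ is monotone, then $C+D$ is rectangular. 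I would cite this directly rather than reprove it. This citation is the main load-bearing step, and identifying the exact reference (Brezis--Haraux rectangularity sum rule) is where I expect the only real subtlety to lie; everything else is bookkeeping.

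Finally, with $A+B=C+D$ monotone and rectangular, the Brezis--Haraux theorem \cite{Br-H} (see also \cite[Theorem~25.24]{BC2017}) gives
\begin{equation*}
\cran(A+B)=\overline{\ran A+\ran B}\qquad\text{and}\qquad \inte\ran(A+B)=\inte(\ran A+\ran B),
\end{equation*}
completing the proof. The one alternative worth keeping in mind, should the sum rule for rectangularity be awkward to cite in the needed generality, is to exploit Proposition~\ref{p:AcocoRaver}: $\mu$-cocoercivity of $C$ means $\R{C}$ is $(1+\mu)^{-1}$-averaged, hence $\R{C}\R{D}$ (or $\R{D}\R{C}$) is averaged by Fact~\ref{fact:ave:comp}, and one can then transfer range information through the identities \eqref{e:voulezvous} relating $\ran C$ to $\ran(\Id-\R{C})$; but the rectangularity route above is shorter and more transparent, so that is the one I would write up.
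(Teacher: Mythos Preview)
Your approach is right in spirit---invoke Brezis--Haraux---but you mis-state its hypotheses and therefore work much harder than necessary. The sum version of the Brezis--Haraux theorem that the paper cites (\cite[Theorem~25.24(ii)]{BC2017}) requires that \emph{one of the summands} be \reckti, not that the sum $A+B$ be \reckti. You already correctly observe that a cocoercive operator with full domain is \reckti\ (this is \cite[Example~25.20(i)]{BC2017}); that is the only rectangularity needed. The whole detour into proving $C+D$ rectangular---where your direct computation visibly breaks down and you fall back on a ``rectangularity sum rule'' that is not straightforward to locate in the needed generality---is unnecessary. Worse, even if you succeeded in showing $A+B$ is \reckti, the single-operator form of Brezis--Haraux would only tell you that $\inte\,\cran(A+B)\subseteq\ran(A+B)$; it does not by itself give the decomposition $\cran(A+B)=\overline{\ran A+\ran B}$. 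For that decomposition you need the sum version, whose hypothesis is rectangularity of one summand.

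There is a second omission: Brezis--Haraux requires $A+B$ to be \emph{maximally} monotone, not merely monotone as you write. The paper secures this via the sum rule (\cite[Corollary~25.5(i)]{BC2017}), using $\dom C=X$. With those two observations---$A+B$ maximally monotone by the sum rule, and $C$ \reckti\ because cocoercive---\cite[Theorem~25.24(ii)]{BC2017} applies directly and the proof is three lines, exactly as in the paper. Your alternative route through Proposition~\ref{p:AcocoRaver} and reflected resolvents would be circular here, since Lemma~\ref{l:B-H} is precisely the building block the paper uses to prove those later range identities.
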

\begin{proof}
Because $\dom C = X$, the sum rule 
(see, e.g., \cite[Corollary~25.5(i)]{BC2017}) 
yields the maximal monotonicity of $A+B$. 
Moreover, $C$ is \reckti\ by \cite[Example~25.20(i)]{BC2017}.
Altogether, the conclusion follows from
the Brezis--Haraux theorem (see, e.g., \cite[Theorem~25.24(ii)]{BC2017}). 
\end{proof}

\subsection{On the range of a displacement map}

Let  $A$ and $B$ be maximally monotone operators on $X$.
Because of 
\begin{equation}
\label{e:siskommt1}
\Id-\R{B}\R{A} = 2\J{A}-2\J{B}\R{A}
= 2\J{A}-2\J{B}(\J{A}-\J{A^{-1}})
= 2\Id-2\J{A}+2(\Id-\J{B})\R{A},
\end{equation} 
we have 
$\ran(\Id-\R{B}\R{A}) \subseteq 2\ran(\Id-\J{A}+(\Id-\J{B})\R{A})
\subseteq \ran(\Id-\R{A}) + \ran(\Id-\R{B})$ by
\eqref{e:voulezvous}.
It follows that
\begin{equation}
\label{e:supertruper}
\cran(\Id-\R{B}\R{A}) \subseteq 
\overline{\ran(\Id-\R{A}) + \ran(\Id-\R{B})}. 
\end{equation}

\section{Composition of two mappings}
\label{s:twocompo}

In this section, we study the composition of two mappings.

\begin{lemma}
\label{l:Key}
Let $A$ and $B$ be maximally monotone on $X$.
Suppose that there exists $C$ in $\{A,B\}$
such that 
$C\colon X\to X$ and $C$ is cocoercive. 
Then the following hold:
\begin{enumerate}
\item 
\label{l:Keyi}
$\overline{\ran(\Id-\R{A})+ \ran(\Id-\R{B})} \subseteq \cran(\Id-\R{B}\R{A})$.
\item 
\label{l:Keyii}
$\overline{\ran(\Id-\R{A})+ \ran(\Id-\R{B})} \subseteq \cran(\Id-\R{A}\R{B})$.
\end{enumerate}
\end{lemma}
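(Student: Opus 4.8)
The plan is to establish (i) for an arbitrary pair $(A,B)$ satisfying the hypotheses and then deduce (ii) by interchanging the roles of $A$ and $B$: the left-hand side is symmetric, the hypothesis that some $C\in\{A,B\}$ be cocoercive is symmetric, and $\R{A}\R{B}$ turns into $\R{B}\R{A}$ under this relabeling. For (i), by \eqref{e:voulezvous} we have $\ran(\Id-\R{A})=2\ran A$ and $\ran(\Id-\R{B})=2\ran B$, so the left-hand side equals $2\,\overline{\ran A+\ran B}$; since $\cran\big(\thalb(\Id-\R{B}\R{A})\big)=\thalb\cran(\Id-\R{B}\R{A})$, it suffices to prove
\[
\overline{\ran A+\ran B}\ \subseteq\ \cran\!\big(\thalb(\Id-\R{B}\R{A})\big).
\]

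The heart of the argument is a parametrization of the range on the right. Combining \eqref{e:siskommt1} with the inverse resolvent identity \eqref{e:invresid} gives $\thalb(\Id-\R{B}\R{A})=\J{A^{-1}}+\J{B^{-1}}\circ\R{A}$. Feeding $z\in X$ through the Minty parametrization \eqref{e:Mintypar} of $\gra A$, and then feeding $\R{A}z=\J{A}z-\J{A^{-1}}z$ through the Minty parametrization of $\gra B$, this identity unwinds to
\[
\ran\!\big(\thalb(\Id-\R{B}\R{A})\big)=\big\{\,a^*+b^*\ :\ (a,a^*)\in\gra A,\ (b,b^*)\in\gra B,\ a-a^*=b+b^*\,\big\}.
\]
The pleasant observation is then that, for a fixed $\delta\in X$, imposing $a^*+b^*=\delta$ together with $a-b=\delta$ forces $b^*=\delta-a^*$ and $b=a-\delta$, which \emph{automatically} satisfy the side condition $a-a^*=b+b^*$. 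Writing $c:=a-\delta$, this yields the clean equivalence
\[
\delta\in\ran\!\big(\thalb(\Id-\R{B}\R{A})\big)\qquad\Longleftrightarrow\qquad \delta\in\ran\!\big(A(\cdot+\delta)+B\big).
\]

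Set $A^{(\delta)}:=A(\cdot+\delta)$. This is maximally monotone, is cocoercive exactly when $A$ is, and satisfies $\ran A^{(\delta)}=\ran A$. Whenever $A$ (resp.\ $B$) is cocoercive, one of $\{A^{(\delta)},B\}$ is cocoercive with full domain, so Lemma~\ref{l:B-H} applies to the pair $(A^{(\delta)},B)$ and gives, for \emph{every} $\delta$, that $\inte\ran(A^{(\delta)}+B)=\inte(\ran A+\ran B)$ and $\cran(A^{(\delta)}+B)=\overline{\ran A+\ran B}$. In particular, if $\delta\in\inte(\ran A+\ran B)$, then $\delta\in\inte\ran(A^{(\delta)}+B)\subseteq\ran(A^{(\delta)}+B)$, hence $\delta\in\ran(\thalb(\Id-\R{B}\R{A}))$ by the equivalence above; thus $\inte(\ran A+\ran B)\subseteq\ran(\thalb(\Id-\R{B}\R{A}))$ and, passing to closures, $\overline{\inte(\ran A+\ran B)}\subseteq\cran(\thalb(\Id-\R{B}\R{A}))$. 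Since $\overline{\ran A+\ran B}=\cran(A+B)$ by Lemma~\ref{l:B-H} and the closure of the range of a maximally monotone operator is convex, $\overline{\inte(\ran A+\ran B)}=\overline{\ran A+\ran B}$ as soon as $\inte(\ran A+\ran B)\neq\varnothing$, which finishes (i) in that case.

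The main obstacle is the degenerate case $\inte(\ran A+\ran B)=\varnothing$, where the density step just used breaks down. There I would either pass to the closed affine hull of $\ran A+\ran B$ — confining $\ran A$ and $\ran B$ to translates of the corresponding closed subspace and invoking a relative-interior form of the Brezis--Haraux conclusion — or exploit that the cocoercive operator among $A,B$ is Lipschitz continuous, so that in the equivalence above a small perturbation of $\delta$ can be absorbed by a contraction/fixed-point argument, upgrading the mere membership $\delta\in\cran(A^{(\delta)}+B)$ directly to $\delta\in\cran(\thalb(\Id-\R{B}\R{A}))$. Making this degenerate case airtight is where the real work lies.
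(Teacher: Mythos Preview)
Your parametrization of $\ran\big(\thalb(\Id-\R{B}\R{A})\big)$ and the resulting equivalence
\[
\delta\in\ran\big(\thalb(\Id-\R{B}\R{A})\big)\ \Longleftrightarrow\ \delta\in\ran\big(A(\cdot+\delta)+B\big)
\]
are correct and elegant, and the symmetry argument reducing \ref{l:Keyii} to \ref{l:Keyi} is fine. But there is a genuine gap. Your equivalence is between \emph{exact} ranges; to finish you need $\delta\in\ran(A^{(\delta)}+B)$, while the closure statement in Lemma~\ref{l:B-H} only delivers $\delta\in\cran(A^{(\delta)}+B)$. You bridge this with the interior statement of Lemma~\ref{l:B-H}, which requires $\inte(\ran A+\ran B)\neq\varnothing$. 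In an infinite-dimensional $X$ this fails routinely (take $A\equiv 0$, which is cocoercive, and any $B$ whose range has empty interior), so the ``degenerate'' case is not a corner case but the main one. Your first suggested remedy does not go through as stated: there is no off-the-shelf relative-interior version of Brezis--Haraux in general Hilbert space, and closed convex sets there can have empty strong relative interior. Your second suggestion is in the right direction but is not yet a proof.

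The paper avoids the interior detour entirely. It first proves a \texttt{Claim}: for \emph{any} maximally monotone $\wtA,\wtB$, if $0\in\cran(\wtA+\wtB)$ then $0\in\cran(\Id-\R{\wtB}\R{\wtA})$. The proof is a short sequential argument: pick $(x_n,u_n)\in\gra\wtA$ and $(x_n,v_n)\in\gra\wtB$ with $u_n+v_n\to 0$, set $z_n:=(\Id-\R{\wtB}\R{\wtA})(x_n+u_n)\in\ran(\Id-\R{\wtB}\R{\wtA})$, and use only the nonexpansiveness of $\J{\wtB^{-1}}$ to show $z_n\to 0$. The general $y$ is then reduced to this \texttt{Claim} via the translations $\wtA:=-y/2+A$ and $\wtB:=B(\cdot-y/2)$, together with the closure part of Lemma~\ref{l:B-H} and the translation formula~\eqref{e:aug17ii}. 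In effect, the \texttt{Claim} is precisely the ``closure-to-closure'' upgrade your argument is missing; note that in your own framework the same computation shows that if $\delta\in\cran(A^{(\delta)}+B)$ --- say $a_n^*+b_n^*\to\delta$ with $a_n^*\in A(c_n+\delta)$, $b_n^*\in B(c_n)$ --- then feeding $z_n:=(c_n+\delta)+a_n^*$ into $\thalb(\Id-\R{B}\R{A})=\J{A^{-1}}+\J{B^{-1}}\R{A}$ and using $\|\J{B^{-1}}(c_n+\delta-a_n^*)-\J{B^{-1}}(c_n+b_n^*)\|\le\|\delta-a_n^*-b_n^*\|\to 0$ gives $\thalb(\Id-\R{B}\R{A})z_n\to\delta$. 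That one-line estimate is the whole ``perturbation absorption'' you allude to, and it makes the interior argument superfluous.
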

\begin{proof}
We start with by establishing the following

\texttt{Claim:}
If $\wtA$ and $\wtB$ are maximally
monotone on $X$, and $0\in\cran(\wtA+\wtB)$, 
then $0\in\cran(\Id-\R{\wtB}\R{\wtA})$.

To this end, 
assume there exist sequences
$(x_n,u_n)_\nnn$ in $\gr \wtA$
and 
$(x_n,v_n)_\nnn$ in $\gr \wtB$
such that
\begin{equation}
  \label{e:siskommt4}
u_n+v_n\to 0.
\end{equation}
The Minty parametrizations (see \eqref{e:Mintypar}) of $\gra \wtA$ and $\gr \wtB$ give 
\begin{equation}
  \label{e:siskommt3}
(\forall\nnn)\;\;
x_n=\J{\wtA}(x_n+u_n), \; u_n=\J{\wtA^{-1}}(x_n+u_n)\;\text{and}\;
v_n=\J{\wtB^{-1}}(x_n+v_n);
\end{equation}
hence, $x_n-u_n=(\J{\wtA}-\J{\wtA^{-1}})(x_n+u_n)=\R{\wtA}(x_n+u_n)$. 
Set
\begin{equation}
  \label{e:siskommt5}
(\forall\nnn)\;\;
z_n = 2\J{\wtA}(x_n+u_n)-2\J{\wtB}\R{\wtA}(x_n+u_n)
\in2\ran(\J{\wtA}-\J{\wtB}\R{\wtA}) \stackrel{\eqref{e:siskommt1}}{=} \ran(\Id-\R{\wtB}\R{\wtA}).
\end{equation}
Thus
\begin{equation}
  \label{e:siskommt2}
(\forall\nnn)\;\;
z_n=2x_n-2\J{\wtB}(x_n-u_n).
\end{equation}
Next, on the one hand,
\begin{align}
  (\forall\nnn)\;\;
z_n-2(u_n+v_n)
&\stackrel{\eqref{e:siskommt2}}{=} 2(x_n-u_n) - 2\J{\wtB}(x_n-u_n)-2v_n
\stackrel{\eqref{e:invresid}}{=} 2\J{\wtB^{-1}}(x_n-u_n)-2v_n\notag\\
&\stackrel{\eqref{e:siskommt3}}{=} 2\J{\wtB^{-1}}(x_n-u_n)-2\J{\wtB^{-1}}(x_n+v_n).
\end{align}
On the other hand,
as $\J{\wtB^{-1}}$ is nonexpansive, we have
$\|\J{\wtB^{-1}}(x_n-u_n)-\J{\wtB^{-1}}(x_n+v_n)\|
\leq \|(x_n-u_n)-(x_n+v_n)\|=\|u_n+v_n\|\to 0$ by \eqref{e:siskommt4}.
Altogether,
$z_n-2(u_n+v_n)\to 0$ and thus $z_n\to 0$ by \eqref{e:siskommt4}.
Recalling that $(z_n)_\nnn$ lies in $\ran(\Id-\R{\wtB}\R{\wtA})$ 
from \eqref{e:siskommt5},
we finally deduce the \texttt{Claim} that $0\in\cran(\Id-\R{\wtB}\R{\wtA})$. 

Having established the \texttt{Claim}, we now let $y\in X$.

\emph{Case~1:} $C=B$. \\
\ref{l:Keyi}:
Indeed, the following implications
\begin{align*}
y\in \overline{\ran(\Id-\R{A}) + \ran(\Id-\R{B})}
&\siff
y/2\in \overline{\ran A + \ran B}\tag{by \eqref{e:voulezvous}}\\
&\siff
y/2\in \overline{\ran A + \ran B(\cdot - y/2)}
\tag{as $\ran B = \ran B(\cdot -y/2)$}\\
&\siff 
y/2\in \cran(A+B(\cdot -y/2))
\tag{by Lemma~\ref{l:B-H}}\\
&\siff 
0\in \cran((-y/2+A)+B(\cdot -y/2))\\
&\RA
0\in\cran(\Id-\R{B(\cdot-y/2)}\R{-y/2+A}) \tag{by the \texttt{Claim}}\\
&\siff
0\in-y+\cran(\Id-\R{B}\R{A}) \tag{by \eqref{e:aug17ii}}\\
&\siff
y\in\cran(\Id-\R{B}\R{A})
\end{align*}
yield the conclusion. 

\ref{l:Keyii}:
Similarly to \ref{l:Keyi}, the following implications
\begin{align*}
y\in \overline{\ran(\Id-\R{A}) + \ran(\Id-\R{B})}
&\siff
y/2\in \overline{\ran A + \ran B}\tag{by \eqref{e:voulezvous}}\\
&\siff
y/2\in\overline{\ran A(\cdot-y/2) + \ran B}
\tag{as $\ran A = \ran A(\cdot -y/2)$}\\
&\siff 
y/2\in \cran(A(\cdot -y/2)+B)
\tag{by Lemma~\ref{l:B-H}}\\
&\siff 
0\in \cran(A(\cdot-y/2)+(-y/2+B))\\
&\RA
0\in \cran(\Id-\R{A(\cdot-y/2)}\R{-y/2+B})
\tag{by the \texttt{Claim}}\\
&\siff
0\in-y+\cran(\Id-\R{A}\R{B}) \tag{by \eqref{e:aug17ii}}\\
&\siff
y\in\cran(\Id-\R{A}\R{B})
\end{align*}
yield the conclusion. 

\emph{Case~2:} $C=A$. \\
\ref{l:Keyi}:
Apply item~\ref{l:Keyii} of \emph{Case~1}, with $A,B$ replaced by $B,A$. 
\ref{l:Keyii}:
Apply item~\ref{l:Keyi} of \emph{Case~1}, with $A,B$ replaced by $B,A$. 
\end{proof}

\begin{theorem} 
\label{t:cKey}
Let $A$ and $B$ be maximally monotone.
Suppose that there exists $C\in\{A,B\}$ such that 
$C\colon X\to X$ and $C$ is cocoercive. 
Then 
\begin{equation}
  \overline{\ran(\Id-\R{A})+\ran(\Id-\R{B})}
  =\cran(\Id-\R{B}\R{A})=\cran(\Id-\R{A}\R{B}).
\end{equation}
\end{theorem}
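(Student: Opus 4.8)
The statement is a two-sided inclusion, and each direction is already available from the preparatory material; the proof is simply a matter of assembling the pieces. For the ``$\supseteq$'' direction I would invoke Lemma~\ref{l:Key}: under the standing hypothesis that some $C\in\{A,B\}$ maps $X$ into $X$ and is cocoercive, part~\ref{l:Keyi} gives
$\overline{\ran(\Id-\R{A})+\ran(\Id-\R{B})}\subseteq\cran(\Id-\R{B}\R{A})$
and part~\ref{l:Keyii} gives
$\overline{\ran(\Id-\R{A})+\ran(\Id-\R{B})}\subseteq\cran(\Id-\R{A}\R{B})$.

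For the ``$\subseteq$'' direction I would use the unconditional inclusion \eqref{e:supertruper}, namely $\cran(\Id-\R{B}\R{A})\subseteq\overline{\ran(\Id-\R{A})+\ran(\Id-\R{B})}$, which holds for arbitrary maximally monotone $A,B$. Applying \eqref{e:supertruper} with the roles of $A$ and $B$ interchanged (and noting that the right-hand side is symmetric in $A$ and $B$) yields $\cran(\Id-\R{A}\R{B})\subseteq\overline{\ran(\Id-\R{A})+\ran(\Id-\R{B})}$ as well.

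Chaining these four inclusions shows that the common set $\overline{\ran(\Id-\R{A})+\ran(\Id-\R{B})}$ both contains and is contained in each of $\cran(\Id-\R{B}\R{A})$ and $\cran(\Id-\R{A}\R{B})$, so all three sets coincide, which is the claim. There is essentially no obstacle here: the substantive work was the \texttt{Claim} and the translation argument inside the proof of Lemma~\ref{l:Key}; Theorem~\ref{t:cKey} is just the clean packaging of that lemma together with \eqref{e:supertruper}. The only point to be careful about is that the cocoercivity hypothesis is exactly what Lemma~\ref{l:Key} requires for the ``$\supseteq$'' inclusions, whereas \eqref{e:supertruper} needs nothing beyond maximal monotonicity.
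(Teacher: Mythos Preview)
Your proposal is correct and is essentially identical to the paper's proof: the paper chains the inclusions
\[
\overline{\ran(\Id-\R{A})+\ran(\Id-\R{B})}
\subseteq \cran(\Id-\R{B}\R{A})\cap\cran(\Id-\R{A}\R{B})
\subseteq \cran(\Id-\R{B}\R{A})\cup\cran(\Id-\R{A}\R{B})
\subseteq \overline{\ran(\Id-\R{A})+\ran(\Id-\R{B})},
\]
using Lemma~\ref{l:Key} for the first step and \eqref{e:supertruper} (applied once as stated and once with $A$ and $B$ interchanged) for the last. Your argument assembles exactly the same ingredients in the same way.
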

\begin{proof}
Indeed, we have
\begin{align*}
\overline{\ran(\Id-\R{A}) + \ran(\Id-\R{B})}
&\subseteq \cran(\Id-\R{B}\R{A})\cap\cran(\Id-\R{A}\R{B}) \tag{by Lemma~\ref{l:Key}}\\
&\subseteq \cran(\Id-\R{B}\R{A})\cup\cran(\Id-\R{A}\R{B}) \\
&\subseteq \overline{\ran(\Id-\R{A}) + \ran(\Id-\R{B})} \tag{by \eqref{e:supertruper}}.
\end{align*}
Hence all inclusions are in fact equalities and we are done. 
\end{proof}

\begin{theorem}[\rm \textbf{composition of two 
  nonexpansive mappings}]
\label{t:key}
Let $R_1$ and $R_2$ be nonexpansive on $X$,
and suppose that $R_1$ or $R_2$ is actually averaged nonexpansive.
Then
\begin{equation}
\cran(\Id-R_2R_1) = \cran(\Id-R_1R_2) = 
\overline{\ran(\Id-R_1)+\ran(\Id-R_2)}.
\end{equation}
\end{theorem}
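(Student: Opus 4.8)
The plan is to reduce the theorem to \Cref{t:cKey} by representing both nonexpansive mappings as reflected resolvents. Since every nonexpansive self-map of $X$ arises as the reflected resolvent of a maximally monotone operator, I would pick maximally monotone operators $A$ and $B$ with $R_1 = \R{A}$ and $R_2 = \R{B}$ (concretely, $\J{A} = \tfrac12(\Id+R_1)$ and $A = \J{A}^{-1}-\Id$, and likewise for $B$). Under this identification the asserted identity becomes
\begin{equation*}
\cran(\Id-\R{B}\R{A}) = \cran(\Id-\R{A}\R{B}) = \overline{\ran(\Id-\R{A})+\ran(\Id-\R{B})},
\end{equation*}
which is precisely the conclusion of \Cref{t:cKey} — \emph{provided} we can verify that one of $A$, $B$ is single-valued on all of $X$ and cocoercive. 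So the whole task becomes: translate the averagedness hypothesis on the $R_i$ into a cocoercivity hypothesis on the corresponding monotone operator.

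For that translation I would invoke \Cref{p:AcocoRaver}. First, since the claimed statement is symmetric under interchanging $(R_1,R_2)$ — this swaps $\Id-R_2R_1$ with $\Id-R_1R_2$ and leaves the right-hand side unchanged — we may assume without loss of generality that $R_1$ is averaged nonexpansive. Then $R_1$ is $\alpha$-averaged for some $\alpha\in\left]0,1\right[$, and with $\mu := (1-\alpha)/\alpha > 0$ we have $\alpha = (1+\mu)^{-1}$. By \Cref{p:AcocoRaver}, $R_1 = \R{A}$ being $(1+\mu)^{-1}$-averaged is equivalent to $A$ being $\mu$-cocoercive; in particular $A\colon X\to X$ and $A$ is cocoercive. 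Hence $C := A$ witnesses the hypothesis of \Cref{t:cKey}, and applying that theorem yields the displayed chain of equalities, which after unwinding $R_1 = \R{A}$ and $R_2 = \R{B}$ is exactly the assertion.

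The only delicate point is the constant bookkeeping: one must use that an averaged mapping is $\alpha$-averaged for some $\alpha$ \emph{strictly} below $1$, so that $\mu = (1-\alpha)/\alpha$ is a genuine positive number and \Cref{p:AcocoRaver} is applicable. Beyond that, nothing new is required — the representation of nonexpansive maps as reflected resolvents and the symmetry of the statement are immediate, and all the analytic content is already packaged in \Cref{t:cKey} (hence ultimately in \Cref{l:Key} and, through \Cref{l:B-H}, in the Brezis--Haraux theorem). I therefore expect no real obstacle here; the proof is essentially a dictionary argument once \Cref{t:cKey} is in hand.
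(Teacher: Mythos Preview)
Your proposal is correct and follows essentially the same route as the paper: represent $R_1=\R{A}$ and $R_2=\R{B}$ via the firmly nonexpansive mappings $\tfrac12(\Id+R_i)$, use \Cref{p:AcocoRaver} to turn averagedness of one $R_i$ into cocoercivity of the corresponding monotone operator, and then invoke \Cref{t:cKey}. The paper handles the two cases ($R_1$ averaged vs.\ $R_2$ averaged) separately rather than appealing to symmetry, but this is a cosmetic difference only.
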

\begin{proof}
Because $R_1$ and $R_2$ are nonexpansive,
there exist maximally monotone operators
$A$ and $B$ on $X$ such that
$R_1=\R{A}$ and $R_2=\R{B}$ by using \cite[Corollary~23.9~and~Proposition~4.4]{BC2017}
applied to $\tfrac{1}{2}(\Id+R_1)$ and $\tfrac{1}{2}(\Id+R_2)$. 

\emph{Case~1:} $R_1$ is averaged nonexpansive. \\
By Proposition~\ref{p:AcocoRaver}, $A\colon X\to X$ and $A$ is cocoercive.
The conclusion is now clear from 
Theorem~\ref{t:cKey}.

\emph{Case~2:} $R_2$ is averaged nonexpansive.\\
Argue similarly to \emph{Case~1}, or apply \emph{Case~1} 
with $R_1$ and $R_2$ interchanged. 
\end{proof}

We conclude this section with an $m$-operator version of Theorem~\ref{t:key} (which we will
sharpen in Section~\ref{s:compo}).

\begin{proposition}
\label{p:aug12i}
Let $m\in\{1,2,\ldots\}$, and let
$R_1,\ldots,R_m$ be averaged nonexpansive operators on $X$.
Then
\begin{equation}
\cran(\Id-R_m\cdots R_1) 
= 
\overline{\ran(\Id-R_1)+\cdots+\ran(\Id-R_m)}.
\end{equation}
\end{proposition}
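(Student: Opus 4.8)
The plan is to proceed by induction on $m$. The case $m=1$ is the trivial identity $\cran(\Id-R_1)=\overline{\ran(\Id-R_1)}$, and the case $m=2$ is exactly Theorem~\ref{t:key} applied to two averaged nonexpansive mappings. So I would fix $m\geq 3$ and assume the formula holds for any $m-1$ averaged nonexpansive operators.

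For the inductive step I would factor $R_m\cdots R_1 = R_m\circ S$, where $S:=R_{m-1}\cdots R_1$. By Fact~\ref{fact:ave:comp}, $S$ is averaged nonexpansive, so Theorem~\ref{t:key} applies to the pair $\{S,R_m\}$ and yields
\[
\cran(\Id-R_m\cdots R_1) = \overline{\ran(\Id-S)+\ran(\Id-R_m)}.
\]
Writing $\Sigma:=\ran(\Id-R_1)+\cdots+\ran(\Id-R_{m-1})$, the induction hypothesis gives $\cran(\Id-S)=\overline{\Sigma}$, so it remains to identify $\overline{\overline{\Sigma}+\ran(\Id-R_m)}$ with $\overline{\Sigma+\ran(\Id-R_m)}$.

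This last point is the elementary set fact that $\overline{\,\overline{T_1}+T_2\,}=\overline{T_1+T_2}$ for any subsets $T_1,T_2$ of $X$: the inclusion $\overline{T_1+T_2}\subseteq\overline{\,\overline{T_1}+T_2\,}$ is immediate from $T_1+T_2\subseteq\overline{T_1}+T_2$, while for the reverse inclusion any $t_1\in\overline{T_1}$ is a limit of points of $T_1$, hence $t_1+t_2\in\overline{T_1+T_2}$ for every $t_2\in T_2$, giving $\overline{T_1}+T_2\subseteq\overline{T_1+T_2}$ and then the claim upon taking closures. Applying this with $T_1=\Sigma$ and $T_2=\ran(\Id-R_m)$, and noting $\Sigma+\ran(\Id-R_m)=\ran(\Id-R_1)+\cdots+\ran(\Id-R_m)$, closes the induction.

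I do not expect a genuine obstacle here: the substantive work — that composing two averaged mappings behaves correctly on the level of displacement ranges, and that a composition of averaged mappings is again averaged — is already packaged in Theorem~\ref{t:key} and Fact~\ref{fact:ave:comp}. The only delicate spot is the closure manipulation in the previous paragraph, so I would state it explicitly (or recall it as standard) rather than let it pass silently.
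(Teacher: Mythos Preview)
Your proof is correct and follows essentially the same route as the paper's own argument: induction on $m$, factoring off the last operator, invoking Fact~\ref{fact:ave:comp} to see that the remaining composition is averaged, applying Theorem~\ref{t:key}, and then using the inductive hypothesis together with the closure identity $\overline{\,\overline{T_1}+T_2\,}=\overline{T_1+T_2}$. The only (minor) difference is that you spell out and justify this closure identity explicitly, whereas the paper uses it silently in the final step.
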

\begin{proof}
The proof is by induction on $m$. 
The base case, $m=1$, is trivial.
Now assume that the result is true for some integer $m\geq 1$,
and that we are given $m+1$ averaged nonexpansive operators 
$R_1,\ldots,R_{m+1}$ on $X$. 
By Fact~\ref{fact:ave:comp}, $R_m\cdots R_1$ is averaged nonexpansive. 
Applying Theorem~\ref{t:key} to 
$R_m\cdots R_1$ and $R_{m+1}$ we obtain
\begin{align*}
&\hspace{-1cm} \cran(\Id-R_{m+1}\cdots R_1)\\
&= \cran\big(\Id-R_{m+1}(R_m\cdots R_1)\big)\\
&= \overline{\ran(\Id-R_{m+1})+\ran(\Id-R_m\cdots R_1)}
\tag{by Theorem~\ref{t:key}}\\
&= \overline{\ran(\Id-R_{m+1})+\overline{\ran(\Id-R_1)+\cdots + \ran(\Id-R_m)}}
\tag{use inductive hypothesis}\\
&= \overline{\ran(\Id-R_1)+\cdots + \ran(\Id-R_{m+1})},
\end{align*}
and the proof is complete.
\end{proof}

\section{Compositions}
\label{s:compo}

By combining Theorem~\ref{t:key} with Proposition~\ref{p:aug12i},
we are now ready for the main result on compositions. 

\begin{theorem}[{\rm \textbf{main result on compositions}}]
\label{t:aug12ii}
Let $m\in\{1,2,\ldots\}$, and let $R_1,\ldots,R_m$ be nonexpansive on $X$. 
Suppose there exists $j\in\{1,\ldots,m\}$ such that 
each $R_i$ is averaged nonexpansive whenever $i\neq j$. 
Let $\sigma$ be a permutation of $\{1,\ldots,m\}$. 
Then
\begin{subequations}
  \label{e:aug14iandii}
\begin{align}
\cran(\Id-R_m\cdots R_1) 
&= \overline{\ran(\Id-R_1)+\cdots+\ran(\Id-R_m)}\label{e:aug14i}\\
&=\cran\big(\Id-R_{\sigma(m)}\cdots R_{\sigma(1)}\big); \label{e:aug14ii}
\end{align}
\end{subequations}
consequently, 
\begin{subequations}
  \label{e:aug14iiiandiv}
\begin{equation}
  \label{e:aug14iii}
\mdv{R_{\sigma(m)}\cdots R_{\sigma(1)}}
= \mdv{R_m\cdots R_1} 
= P_{\overline{\ran(\Id-R_1)+\cdots+\ran(\Id-R_m)}}(0)
\end{equation}
and
\begin{equation}
\label{e:aug14iv}
\|\mdv{R_{\sigma(m)}\cdots R_{\sigma(1)}}\|
=\|\mdv{R_m\cdots R_1}\| \leq \|\mdv{R_1}\|+\cdots+\|\mdv{R_m}\|. 
\end{equation}
\end{subequations}
\end{theorem}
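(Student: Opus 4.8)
The plan is to prove the chain of identities \eqref{e:aug14i}--\eqref{e:aug14ii} first; the statements \eqref{e:aug14iii} and \eqref{e:aug14iv} then follow by purely elementary considerations. The heart of the matter is \eqref{e:aug14i}: once it is available for an arbitrary ordering of the operators (subject to the single-exception hypothesis), \eqref{e:aug14ii} comes for free by re-applying it to a permuted tuple, and both displays in \eqref{e:aug14iiiandiv} drop out of the defining formula $\mdv{R}=P_{\cran(\Id-R)}(0)$. It is convenient to record at the outset that each $\cran(\Id-R_i)$ is closed and convex (this underlies the well-definedness of $\mdv{R_i}$), hence so is $\overline{\ran(\Id-R_1)+\cdots+\ran(\Id-R_m)}$, and that closure and Minkowski sum interact via the elementary identity $\overline{A+B}=\overline{\overline{A}+\overline{B}}$ for subsets $A,B$ of $X$; these are the only ``soft'' ingredients used below.

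To prove \eqref{e:aug14i} I would sandwich the (possibly non-averaged) operator $R_j$: write $S:=R_{j-1}\cdots R_1$ and $T:=R_m\cdots R_{j+1}$, each read as $\Id$ when its index range is empty (i.e., $j=1$, respectively $j=m$). By Fact~\ref{fact:ave:comp}, $S$ and $T$ are averaged nonexpansive, while $R_jS$ is merely nonexpansive, and $R_m\cdots R_1=T R_j S$. Applying Theorem~\ref{t:key} to the nonexpansive maps $R_jS$ and $T$ (the latter averaged) gives $\cran(\Id-R_m\cdots R_1)=\overline{\ran(\Id-R_jS)+\ran(\Id-T)}$; applying Theorem~\ref{t:key} again to the nonexpansive maps $S$ and $R_j$ (the former averaged) gives $\cran(\Id-R_jS)=\overline{\ran(\Id-S)+\ran(\Id-R_j)}$. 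Substituting the second into the first and absorbing closures via $\overline{A+B}=\overline{\overline{A}+\overline{B}}$ yields $\cran(\Id-R_m\cdots R_1)=\overline{\ran(\Id-R_j)+\overline{\ran(\Id-S)}+\overline{\ran(\Id-T)}}$; now Proposition~\ref{p:aug12i} expands $\overline{\ran(\Id-S)}=\overline{\ran(\Id-R_1)+\cdots+\ran(\Id-R_{j-1})}$ and $\overline{\ran(\Id-T)}=\overline{\ran(\Id-R_{j+1})+\cdots+\ran(\Id-R_m)}$ (both equal to $\{0\}$ in the degenerate cases), and one more absorption of closures collapses the right-hand side to $\overline{\ran(\Id-R_1)+\cdots+\ran(\Id-R_m)}$, which is \eqref{e:aug14i}.

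For \eqref{e:aug14ii}, observe that the tuple $(R_{\sigma(1)},\ldots,R_{\sigma(m)})$ again contains exactly one entry that need not be averaged, namely $R_j$, now occupying position $\sigma^{-1}(j)$; hence \eqref{e:aug14i} applies verbatim to it and gives $\cran(\Id-R_{\sigma(m)}\cdots R_{\sigma(1)})=\overline{\ran(\Id-R_{\sigma(1)})+\cdots+\ran(\Id-R_{\sigma(m)})}=\overline{\ran(\Id-R_1)+\cdots+\ran(\Id-R_m)}$ by commutativity of set addition. This is \eqref{e:aug14ii}, and \eqref{e:aug14iii} is then immediate, since the minimal displacement vector of each of $R_m\cdots R_1$ and $R_{\sigma(m)}\cdots R_{\sigma(1)}$ is the projection of $0$ onto the one closed convex set $\overline{\ran(\Id-R_1)+\cdots+\ran(\Id-R_m)}$. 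Finally, for \eqref{e:aug14iv} the equality is \eqref{e:aug14iii}; for the inequality, note $\mdv{R_i}\in\cran(\Id-R_i)$ for every $i$, so $\sum_{i=1}^m\mdv{R_i}\in\overline{\ran(\Id-R_1)+\cdots+\ran(\Id-R_m)}$, and since $\mdv{R_m\cdots R_1}$ is the element of minimal norm in that set (being the projection of $0$ onto a closed convex set), the triangle inequality gives $\|\mdv{R_m\cdots R_1}\|\le\|\sum_{i=1}^m\mdv{R_i}\|\le\sum_{i=1}^m\|\mdv{R_i}\|$.

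The step demanding the most care is \eqref{e:aug14i} — not because it is deep, but because two things must be handled with discipline. First, the grouping must be arranged so that at each of the two invocations of Theorem~\ref{t:key} the factor paired with the non-averaged block is genuinely averaged; this is precisely why $R_j$ is sandwiched between $S$ and $T$ rather than left as an outermost factor, so that both $R_jS$ (paired with the averaged $T$) and $R_j$ (paired with the averaged $S$) meet the hypothesis. Second, every manipulation of the range sums must pass through the stable identity $\overline{A+B}=\overline{\overline{A}+\overline{B}}$, never through an illegitimate interchange of closure with Minkowski sum. The degenerate cases $j=1$ and $j=m$ simply replace an application of Proposition~\ref{p:aug12i} by the trivial $\ran(\Id-\Id)=\{0\}$ (with the empty sum read as $\{0\}$) and cause no difficulty.
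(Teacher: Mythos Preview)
Your proof is correct and follows essentially the same approach as the paper: isolate the possibly non-averaged factor $R_j$, group the remaining averaged operators into blocks, and combine Theorem~\ref{t:key} with Proposition~\ref{p:aug12i}. The paper uses a two-block split $R_m\cdots R_1=(R_m\cdots R_j)(R_{j-1}\cdots R_1)$ and treats the boundary cases $j\in\{1,m\}$ separately first, whereas you use a three-block split $T\,R_j\,S$ that absorbs those boundary cases via the empty-product convention; the underlying ideas and tools are identical.
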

\begin{proof}
The result is clear when $m\in\{1,2\}$.
So suppose $m\geq 3$.
The conclusion follows in the \emph{boundary case}, i.e., $j=1$ or $j=m$, 
by combining Proposition~\ref{p:aug12i} with Theorem~\ref{t:key}.
So let us assume additionally that $2\leq j \leq m-1$. 
Then $T_m\cdots T_1 = S_2S_1$, where
$S_1 = R_{j-1}\cdots R_1$ is averaged nonexpansive and 
$S_2= R_m\cdots R_j$ is nonexpansive. 
On the one hand, $\cran(\Id-S_1) = \overline{\ran(\Id-R_1)+\cdots +\ran(\Id-R_{j-1})}$
by Proposition~\ref{p:aug12i}.
On the other hand, 
$\cran(\Id-S_2) = \overline{\ran(\Id-R_{j})+\cdots \ran(\Id-R_m)}$
by the boundary case.
Altogether, \eqref{e:aug14i} follows by Theorem~\ref{t:key} 
applied to $S_1$ and $S_2$. 
Finally, \eqref{e:aug14ii} is a direct consequence of \eqref{e:aug14i}
while \eqref{e:aug14iiiandiv} is implied by \eqref{e:aug14iandii}. 
\end{proof}

\begin{remark} Some comments are in order.
  \begin{enumerate}
    \item
    The inequality in \eqref{e:aug14iv}
 massively generalizes
  \cite[Theorem~2.2]{Vic2},
  where each $R_i$ was assumed to be firmly nonexpansive!
  \item 
    The inequality in \eqref{e:aug14iv}
    is sharp
  even when each $R_i$ is firmly nonexpansive; see
  \cite[Example~2.4]{Vic2}. 
  \end{enumerate}
\end{remark}

The following example shows that 
at least one of the mappings
in Theorem~\ref{t:aug12ii} must be averaged. 

\begin{example} 
  \label{ex:all:but:1:fail}
Let $u_1,u_2$ be in $X$, and let $i\in\{1,2\}$.
Set 
$R_i\colon x\mapsto  -x-u_i$, which is nonexpansive but not averaged.
Now $(\forall x\in X)$ we have
$x-R_ix=2x+u_i$. 
Hence
\begin{equation}
\ran(\Id-R_i) = X.
\end{equation}
We also have $(\forall x\in X)$
$R_2R_1x = x+u_1-u_2$
and $R_1R_2x = x+u_2-u_1$. 
It follows that
$(\forall x\in X)$
$x-R_2R_1x = u_2-u_1$
and $x-R_1R_2x = u_1-u_2$.
Thus
$\ran(\Id-R_2R_1) = \{u_2-u_1\}$ and 
$\ran(\Id-R_1R_2) = \{u_1-u_2\}$;
in turn,
\begin{equation}
\mdv{R_2R_1} = u_2-u_1
\;\;\text{yet}\;\;
\mdv{R_1R_2} = u_1-u_2. 
\end{equation}
Therefore, 
the conclusions of Theorem~\ref{t:aug12ii} fail for general nonexpansive mappings
whenever $u_1\neq u_2$. 
\end{example}

When $m=2$, the following positive result for cyclic permutations of the mappings
can be found in \cite[Lemma~2.6]{Sicon2014}:

\begin{proposition}
\label{p:cyclicmagnitude}
Let $m\in\{2,3,\ldots\}$ and let $R_1,\ldots,R_m$ be
nonexpansive on $X$. Then
\begin{equation}
\label{e:aug15ii}
\|\mdv{R_mR_{m-1}\cdots R_1}\| 
= \|\mdv{R_{1}R_m\cdots R_2}\|
= \cdots
= \|\mdv{R_{m-1}\cdots R_1R_m}\|
\end{equation}
\end{proposition}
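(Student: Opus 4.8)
The plan is to reduce the whole chain of equalities in \eqref{e:aug15ii} to the single two-operator statement
\[
\text{$P,Q\colon X\to X$ nonexpansive}\quad\Longrightarrow\quad \|\mdv{PQ}\|=\|\mdv{QP}\|,
\]
and then prove that statement directly. First I would observe that passing from one term of \eqref{e:aug15ii} to the next is exactly a swap of this form: if a term equals $P\circ Q$ where $Q=R_{i_1}$ is the innermost (first-applied) factor and $P=R_{i_m}\circ\cdots\circ R_{i_2}$ is the composition of the remaining $m-1$ maps, then the next cyclic shift is precisely $Q\circ P$. Since a composition of nonexpansive maps is nonexpansive, both $P$ and $Q$ are nonexpansive self-maps of $X$, so the two-operator statement applies to each consecutive pair; transitivity along the $m$ terms then yields all the equalities in \eqref{e:aug15ii}. (For $m=2$ this is just \cite[Lemma~2.6]{Sicon2014}; the reduction shows the $m$-operator version follows by iterating it.)

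To prove the two-operator statement, I would use the scalar characterization
\[
\|\mdv{T}\| = d\big(0,\cran(\Id-T)\big) = \inf_{x\in X}\|x-Tx\|,
\]
valid for any nonexpansive $T$ because the distance from a point to a set equals the distance to its closure, and $\ran(\Id-T)=\{x-Tx\mid x\in X\}$. Now fix $x\in X$ and set $y=Qx$. Then
\[
\|y-QPy\| = \|Qx - Q(PQx)\| \leq \|x - PQx\|,
\]
where the inequality is nonexpansiveness of $Q$ applied to the pair $x$, $PQx$. Taking the infimum over $x\in X$ gives $\|\mdv{QP}\|\leq\|\mdv{PQ}\|$; interchanging the roles of $P$ and $Q$ gives the reverse inequality, hence equality.

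There is no real obstacle here; the only points requiring a little care are (i) making the reduction precise — i.e.\ verifying that each listed cyclic shift is obtained from its predecessor by moving the innermost factor to the outermost position, which is a bookkeeping matter on the indices — and (ii) recording the identity $\|\mdv{T}\|=\inf_{x}\|x-Tx\|$, which follows immediately from the definition $\mdv{T}=P_{\cran(\Id-T)}(0)$ and needs no convexity. It is worth noting in a remark that only \emph{nonexpansiveness} (not averagedness) of the outer factor $Q$ is used, which is why the hypotheses of the proposition are so mild; and that, by contrast, the displacement \emph{vectors} genuinely can change along the cycle, as Example~\ref{ex:all:but:1:fail} shows for non-cyclic rearrangements — so only the magnitudes are cyclically invariant.
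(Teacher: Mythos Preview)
Your proof is correct and is essentially the same argument as the paper's: both show the one-step inequality $\|\mdv{QP}\|\leq\|\mdv{PQ}\|$ by applying nonexpansiveness of the innermost factor $Q$ to the pair $(x,PQx)$ and then cycle around, the only cosmetic difference being that you package the step as a two-operator lemma and use infima directly while the paper works with an approximating sequence. One small slip in your closing remark: the example illustrating that noncyclic rearrangements can change the magnitude is Example~\ref{ex:noncyclic}, not Example~\ref{ex:all:but:1:fail}.
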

\begin{proof}
Let $(x_n)_\nnn$ be a sequence such that
$\|x_n-R_mR_{m-1}\cdots R_1x_n\|\to\|\mdv{R_mR_{m-1}\cdots R_1}\|$
Then
$\|\mdv{R_1R_mR_{m-1}\cdots R_2}\|
\leq \|(R_1x_n)-(R_1R_m\cdots R_2)(R_1x_n)\|
=\|R_1x_n-R_1(R_m\cdots R_1x_n)\|
\leq \|x_n-R_m\cdots R_1x_n\|\to\|\mdv{R_mR_{m-1}\cdots R_1}\|$.
Hence
\begin{equation}
  \|\mdv{R_1R_mR_{m-1}\cdots R_2}\|
  \leq 
  \|\mdv{R_mR_{m-1}\cdots R_1}\|
\end{equation}
and the result follows by continuing cyclically in this fashion. 
\end{proof}

\begin{proposition}
  \label{p:backfromvan}
Let $m=3$ and $R_i=\delta_i\Id-a_i$, where
$\delta_i\in\{-1,0,1\}$. 
Then $\ran(\Id-R_i) = \{a_i\}$, if $\delta_i=1$;
and $\ran(\Id-R_i)=X$, if $\delta_i\in\{-1,0\}$. 
Moreover, 
$R_3R_2R_1\colon X\to X\colon x \mapsto 
\delta_3\delta_2\delta_1 x-a_3-\delta_3a_2-\delta_3\delta_2a_1$ and so
\begin{equation}
\ran(\Id-R_3R_2R_1) = 
\begin{cases}
\{a_3+\delta_3a_2+\delta_3\delta_2a_1\}, &\text{if $\delta_1\delta_2\delta_3=1$;}\\
X, &\text{otherwise,}
\end{cases}
\end{equation}
which implies 
\begin{equation}
  \label{e:backfromvan}
\mdv{R_3R_2R_1} = 
\begin{cases}
a_3+\delta_3a_2+\delta_3\delta_2a_1, &\text{if $\delta_1\delta_2\delta_3=1$;}\\
0, &\text{otherwise.}
\end{cases}
\end{equation}
\end{proposition}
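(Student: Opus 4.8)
The plan is to prove Proposition~\ref{p:backfromvan} by straightforward direct computation, since every map involved is affine and the compositions collapse to a single affine map.

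First I would handle the range of the individual displacement maps. For each $i$, since $R_i = \delta_i\Id - a_i$, we have $(\Id-R_i)x = (1-\delta_i)x + a_i$. If $\delta_i=1$ this is the constant $a_i$, so $\ran(\Id-R_i) = \{a_i\}$; if $\delta_i\in\{-1,0\}$ then $1-\delta_i\in\{1,2\}$ is nonzero and the affine map $x\mapsto (1-\delta_i)x+a_i$ is onto $X$, so $\ran(\Id-R_i)=X$. This gives the first assertion.

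Next I would compute the composition $R_3R_2R_1$ by substituting from the inside out: $R_1x = \delta_1 x - a_1$, then $R_2R_1x = \delta_2(\delta_1 x - a_1) - a_2 = \delta_2\delta_1 x - \delta_2 a_1 - a_2$, and finally $R_3R_2R_1x = \delta_3(\delta_2\delta_1 x - \delta_2 a_1 - a_2) - a_3 = \delta_3\delta_2\delta_1 x - \delta_3\delta_2 a_1 - \delta_3 a_2 - a_3$. Therefore $(\Id - R_3R_2R_1)x = (1-\delta_1\delta_2\delta_3)x + a_3 + \delta_3 a_2 + \delta_3\delta_2 a_1$. Observe that $\delta_1\delta_2\delta_3\in\{-1,0,1\}$, with value $1$ precisely when all three $\delta_i$ equal $1$ or exactly two equal $-1$ (and none is $0$), and otherwise $1-\delta_1\delta_2\delta_3\in\{1,2\}$ is nonzero. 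Reasoning exactly as in the single-map case, $\ran(\Id-R_3R_2R_1)$ is the singleton $\{a_3+\delta_3 a_2 + \delta_3\delta_2 a_1\}$ when $\delta_1\delta_2\delta_3=1$, and all of $X$ otherwise.

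Finally I would read off the minimal displacement vector from its definition $\mdv{R_3R_2R_1} = P_{\overline{\ran}(\Id-R_3R_2R_1)}(0)$. When $\delta_1\delta_2\delta_3=1$ the range is already a closed singleton, so the projection of $0$ onto it is that single point $a_3+\delta_3 a_2+\delta_3\delta_2 a_1$; when $\delta_1\delta_2\delta_3\neq 1$ the closed range is all of $X$, whose projector at $0$ returns $0$. This yields \eqref{e:backfromvan} and completes the proof. There is no real obstacle here: the only point requiring a moment's care is confirming that $1-\delta_1\delta_2\delta_3$ is nonzero in all the ``otherwise'' cases (equivalently that $\delta_1\delta_2\delta_3=1$ cannot happen unless stated), which is immediate from $\delta_i\in\{-1,0,1\}$.
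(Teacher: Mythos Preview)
Your proof is correct and is exactly the straightforward direct computation the paper has in mind; indeed, the paper states Proposition~\ref{p:backfromvan} without a separate proof, treating the affine calculations as self-evident, and your write-up simply makes those calculations explicit.
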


\begin{example}
  \label{ex:noncyclic}
Suppose that $m=3$,
$R_1x=-x$, $R_2x=-x+u$,
and $R_3=x-u$, where $u\in X\smallsetminus\{0\}$.
Then $R_1$ and $R_2$ are nonexpansive but not averaged,
while $R_3$ is firmly nonexpansive.
Therefore  
\begin{equation}
\mdv{R_3R_2R_1} = 0
\end{equation}
while 
\begin{equation}
\mdv{R_3R_1R_2} = 2u\neq 0. 
\end{equation}
Hence $\|\mdv{R_3R_2R_1}\|=0<2\|u\|=\|\mdv{R_3R_1R_2}\|$. 
\end{example}
\begin{proof}
Set $\delta_1=-1$, $\delta_2=-1$, $\delta_3=1$, and 
$a_1=0$, $a_2=-u$, $a_3=u$. 
Then $\delta_1\delta_2\delta_3=1$ and 
\eqref{e:backfromvan} yields
\begin{equation}
  \mdv{R_3R_2R_1} = u+1\cdot(-u)+(1)(-1)\cdot 0 = 0. 
\end{equation}
Similarly,
\begin{equation}
  \mdv{R_3R_1R_2} = u+1\cdot 0+(1)(-1)\cdot (-u) = 2u
\end{equation}
and the proof is complete. 
\end{proof}

\begin{remark} \ 
\begin{enumerate}
\item 
Example~\ref{ex:noncyclic} illustrates 
that the assumption that $m-1$ --- and not merely $1$ --- of the operators
in Theorem~\ref{t:aug12ii} be averaged nonexpansive is critical.
\item 
Example~\ref{ex:noncyclic} also shows that 
Proposition~\ref{p:cyclicmagnitude} fails for noncyclic permutations of the mappings.
\end{enumerate}
\end{remark}

\begin{proposition}
  \label{p:aug15iii}
Let $m\in\{2,3,\ldots,\}$, and let $R_1,\ldots,R_m$ be nonexpansive on $X$. 
Then we have the following equivalences:
\begin{subequations}
\begin{align}
\vspace{-1cm}
&\quad\mdv{R_mR_{m-1}\cdots R_1}\in\ran(\Id-R_mR_{m-1}\cdots R_1)\notag\\
&\siff 
\mdv{R_{m-1}\cdots R_1R_m}\in\ran(\Id-R_{m-1}\cdots R_1R_m)\label{e:aug15i}\\
&\siff \cdots\\
&\siff 
\mdv{R_1R_{m}\cdots R_2}\in\ran(\Id-R_1R_{m}\cdots R_2). 
\end{align}
\end{subequations}

\end{proposition}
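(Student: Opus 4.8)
The plan is to collapse the entire chain of equivalences to a single statement about two nonexpansive maps and then iterate it around the cycle. First I would observe that any two consecutive compositions in the displayed list are of the form $QP$ and $PQ$, where $Q=R_k$ is the map applied \emph{last} in the first composition and $P$ is the composition of the remaining $m-1$ operators taken in their induced cyclic order; both $P$ and $Q$ are nonexpansive. So it suffices to prove: for nonexpansive $P,Q\colon X\to X$, one has $\mdv{QP}\in\ran(\Id-QP)$ if and only if $\mdv{PQ}\in\ran(\Id-PQ)$. By the $P\leftrightarrow Q$ symmetry only one implication needs proof, and then ``continuing cyclically'' --- exactly as in the proof of Proposition~\ref{p:cyclicmagnitude} --- closes the loop and delivers every equivalence in the statement.

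For the two-operator claim I would rely on two facts. The first is Proposition~\ref{p:cyclicmagnitude} in its $m=2$ instance, applied to the maps $P$ and $Q$, which gives $\|\mdv{QP}\|=\|\mdv{PQ}\|=:\gamma$. The second is that for a nonexpansive $T$ the set $\overline{\ran}(\Id-T)$ is nonempty, closed and convex --- this is precisely what makes $\mdv{T}=P_{\overline{\ran}(\Id-T)}(0)$ well defined --- so $\mdv{T}$ is the \emph{unique} element of $\overline{\ran}(\Id-T)$ of minimal norm, and $\|\mdv{T}\|=\operatorname{dist}(0,\overline{\ran}(\Id-T))$.

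Assuming $\mdv{QP}$ is attained, say $x-QPx=\mdv{QP}$ for some $x\in X$, I would test the displacement of $PQ$ at the point $Px$: since $QPx=x-\mdv{QP}$,
\[
(\Id-PQ)(Px)=Px-P(QPx)=Px-P(x-\mdv{QP}),
\]
so nonexpansiveness of $P$ forces $\|(\Id-PQ)(Px)\|\le\|\mdv{QP}\|=\gamma$. On the other hand $(\Id-PQ)(Px)\in\ran(\Id-PQ)$, hence its norm is at least $\operatorname{dist}(0,\overline{\ran}(\Id-PQ))=\gamma$. The two bounds squeeze it to $\gamma$, and uniqueness of the minimal-norm element of the closed convex set $\overline{\ran}(\Id-PQ)$ then identifies $(\Id-PQ)(Px)$ with $\mdv{PQ}$; in particular $\mdv{PQ}\in\ran(\Id-PQ)$, witnessed by $Px$. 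Interchanging $P$ and $Q$ gives the converse.

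The step I expect to be the crux --- though it is a squeeze rather than a genuine obstacle --- is landing \emph{exactly} on $\mdv{PQ}$ rather than merely producing a displacement of norm $\gamma$: this is where I simultaneously need the upper bound from nonexpansiveness of $P$ at the shifted point, the matching lower bound supplied by Proposition~\ref{p:cyclicmagnitude}, and the convexity of $\overline{\ran}(\Id-PQ)$ that makes the minimal-norm element unique. The remainder is bookkeeping: peeling one operator off a composition to expose the $QP$/$PQ$ pattern, and noting that the $m$ cyclic shifts form a single cycle so that iterating the two-operator equivalence reaches every composition in the display.
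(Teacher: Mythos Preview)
Your proposal is correct and follows essentially the same route as the paper: reduce by symmetry to a single cyclic step, evaluate the displacement of the shifted composition at the image of the witness point under the peeled-off block, squeeze its norm between the two equal minimal-displacement magnitudes (supplied by Proposition~\ref{p:cyclicmagnitude}), and invoke uniqueness of the minimal-norm element of the closed convex set $\overline{\ran}(\Id-PQ)$. The only difference is cosmetic: the paper obtains the lower bound on $\|(\Id-PQ)(Px)\|$ via a short detour through $(QP)^2$ and the fact that attainment propagates along the orbit, whereas you observe directly that any element of $\ran(\Id-PQ)$ has norm at least $\|\mdv{PQ}\|$, which is a slight streamlining.
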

\begin{proof}
By symmetry, it suffices to show that ``$\RA$'' in \eqref{e:aug15i} holds.
To this end, assume that there is $y\in X$ such that
$\mdv{R_mR_{m-1}\cdots R_1} = y-R_mR_{m-1}\cdots R_1y$.
Either a direct argument or \cite[Proposition~2.5(iv)]{BM2015:AFF} 
shows that 
$\mdv{R_mR_{m-1}\cdots R_1} = (R_mR_{m-1}\cdots R_1)y-(R_mR_{m-1}\cdots R_1)^2y$.
Thus
\begin{align*}
  \|\mdv{R_{m-1}\cdots R_1R_m}\| 
  &\stackrel{\eqref{e:aug15ii}}{=} 
  \|\mdv{R_{m}\cdots R_2R_1}\| \\
  &=\|(R_mR_{m-1}\cdots R_1)y-(R_mR_{m-1}\cdots R_1)^2y\|\\
  &\leq\|R_{m-1}\cdots R_1y-(R_{m-1}\cdots R_1)R_mR_{m-1}\cdots R_1y\|\\
  &\leq \|y-R_mR_{m-1}\cdots R_1y\|\\
  &= \|\mdv{R_mR_{m-1}\cdots R_1}\|\\
  &\stackrel{\eqref{e:aug15ii}}{=} 
  \|\mdv{R_{m-1}\cdots R_1R_m}\| .
\end{align*}
Consequently, $\|\mdv{R_{m-1}\cdots R_1R_m}\| 
= \|R_{m-1}\cdots R_1y-(R_{m-1}\cdots R_1R_m)R_{m-1}\cdots R_1y\|$.
Therefore 
\begin{align*}
  \mdv{R_{m-1}\cdots R_1R_m}
  &= R_{m-1}\cdots R_1y-(R_{m-1}\cdots R_1R_m)R_{m-1}\cdots R_1y\\
  &\in\ran(\Id-R_{m-1}\cdots R_1R_m),
\end{align*}
and the proof is complete. 
\end{proof}

\begin{remark}
  \label{rem:De:Pierro}
  Proposition~\ref{p:aug15iii}
  shows that the minimal displacement vector is attained
  for all cyclic shifts of the composition.
  For noncyclic shifts, this result goes wrong
  as De Pierro observed in \cite[Section~3 on page~193]{DeP2000}
  (see also \cite[Example~2.7]{BM2015:AFF}).
\end{remark}

Let us conclude this section with an application to
the projected gradient descent method (see also \cite{Moursi17} for 
an analysis of the forward-backward method in the possibly inconsistent case). 

\begin{corollary}[{\rm \bf projected gradient descent}]
Let $f\colon X\to\RR$ be convex and differentiable on $X$,
with $\nabla f$ being $L$-Lipschitz continuous, let
$C$ be a nonempty closed convex subset of $X$, 
let $\alpha\in\left]0,2\right[$. 
Then the magnitude of the minimal dispacement vector
of the projected gradient descent operator
\begin{equation}
  T\colon X\to X \colon x\mapsto \Pj{C}\circ\big(\Id-\alpha\tfrac{1}{L}\nabla f\big)
\end{equation}
satisfies
$\|\mdv{T}\|\leq \alpha{L}^{-1}\inf\|\nabla f(X)\|$. 
\end{corollary}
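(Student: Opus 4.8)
The plan is to realize $T$ as the composition of two \emph{averaged} nonexpansive operators and then feed this into Theorem~\ref{t:aug12ii} with $m=2$. Concretely, set $R_1\colon X\to X\colon x\mapsto x-\tfrac{\alpha}{L}\nabla f(x)$ and $R_2=\Pj{C}$, so that $T=R_2R_1$.

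First I would check that $R_1$ and $R_2$ are both averaged nonexpansive. The projector $R_2=\Pj{C}$ is firmly nonexpansive, hence $\tfrac{1}{2}$-averaged. For $R_1$, the Baillon--Haddad theorem (see, e.g., \cite{BC2017}) guarantees that, since $f$ is convex and $\nabla f$ is $L$-Lipschitz, $\nabla f$ is $\tfrac{1}{L}$-cocoercive; equivalently $\tfrac{1}{L}\nabla f$ is firmly nonexpansive, say $\tfrac{1}{L}\nabla f=\tfrac{1}{2}(\Id+N)$ with $N$ nonexpansive. Then $R_1=\Id-\tfrac{\alpha}{2}(\Id+N)=(1-\tfrac{\alpha}{2})\Id+\tfrac{\alpha}{2}(-N)$, and since $-N$ is nonexpansive and $\tfrac{\alpha}{2}\in\left]0,1\right[$ --- this is precisely where the hypothesis $\alpha\in\left]0,2\right[$ is used --- $R_1$ is $\tfrac{\alpha}{2}$-averaged nonexpansive.

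Next I would identify the two individual minimal displacement vectors. Because $\Pj{C}c=c$ for every $c$ in the nonempty set $C$, we have $0\in\ran(\Id-\Pj{C})$, whence $\mdv{R_2}=\mdv{\Pj{C}}=0$. On the other hand $\Id-R_1=\tfrac{\alpha}{L}\nabla f$, so $\ran(\Id-R_1)=\tfrac{\alpha}{L}\,\nabla f(X)$, and therefore $\|\mdv{R_1}\|$, which is the distance from $0$ to $\overline{\ran(\Id-R_1)}$, equals $\tfrac{\alpha}{L}\inf\|\nabla f(X)\|$ (passing to the closure does not change the infimum of the norm).

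Finally, since $R_1$ and $R_2$ are averaged nonexpansive, \eqref{e:aug14iv} of Theorem~\ref{t:aug12ii} applied to $T=R_2R_1$ gives $\|\mdv{T}\|=\|\mdv{R_2R_1}\|\leq\|\mdv{R_1}\|+\|\mdv{R_2}\|=\tfrac{\alpha}{L}\inf\|\nabla f(X)\|+0$, which is the asserted bound. The only nonroutine ingredient beyond a direct substitution into Theorem~\ref{t:aug12ii} is the appeal to Baillon--Haddad to upgrade ``$f$ convex and $\nabla f$ $L$-Lipschitz'' to cocoercivity of $\nabla f$; I do not anticipate any genuine obstacle here.
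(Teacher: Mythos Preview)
Your proposal is correct and follows essentially the same route as the paper's proof: both set $R_1=\Id-\alpha L^{-1}\nabla f$, $R_2=\Pj{C}$, observe $\mdv{R_2}=0$, compute $\|\mdv{R_1}\|=\alpha L^{-1}\inf\|\nabla f(X)\|$, and invoke \eqref{e:aug14iv} of Theorem~\ref{t:aug12ii}. You supply more detail than the paper (in particular, the explicit verification via Baillon--Haddad that $R_1$ is averaged), but note that since $R_2=\Pj{C}$ is already firmly nonexpansive, Theorem~\ref{t:aug12ii} with $m=2$ only requires $R_1$ to be nonexpansive --- which your averagedness argument of course establishes a fortiori.
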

\begin{proof}
This follows from Theorem~\ref{t:aug12ii} with $m=2$,
$R_1 = \Id-\alpha L^{-1}\nabla f$ and $R_2=P_C$, where $\mdv{R_2}=0$.
Hence $\|\mdv{R_2}\|=0$ and 
$\|\mdv{R_1}\|=\inf_{x\in X}\|x-R_1x\|=\inf_{x\in X}\|\alpha L^{-1}\nabla f(x)\|$. 
Now use \eqref{e:aug14iv}. 
\end{proof}

\section{Convex combinations}
\label{s:convco}

In this final section, we focus on convex combinations of nonexpansive mappings.

\begin{theorem}[{\rm \textbf{main result on convex combinations}}]
  \label{thm:main:convc}
  Let $m\in\{2,3,\ldots\}$, let
  $R_1,\ldots,R_m$ be nonexpansive on $X$,
  and let $\lambda_1,\ldots,\lambda_m$ be in
  $\left]0,1\right[$ such that $\sum_{i=1}^m  \lambda_i =1$.
  Set 
  \begin{equation}
    \overline{R} = \sum_{i=1}^m\lambda_iR_i.
  \end{equation}
  Then
  \begin{equation}
    \label{e:aug16i}
    \cran(\Id-\overline{R}) = 
    \overline{\textstyle\sum_{i=1}^m\lambda_i\ran(\Id-R_i)}.
  \end{equation}
  Consequently, 
  \begin{equation}
    \label{e:aug16ii}
    \mdv{\overline{R}}
    = \Pj{\overline{\sum_{i=1}^m\lambda_i\ran(\Id-R_i)}}(0)
  \end{equation}
  and
  \begin{equation}
    \label{e:aug16iii}
    \|\mdv{\overline{R}}\| \leq 
    \|\textstyle\sum_{i=1}^m\lambda_i \mdv{R_i}\|
    \leq \textstyle\sum_{i=1}^m\lambda_i \|\mdv{R_i}\|.
  \end{equation}
\end{theorem}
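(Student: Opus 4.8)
The plan is to mimic the structure of Theorem~\ref{t:aug12ii}: first establish the range identity \eqref{e:aug16i}, and then read off \eqref{e:aug16ii} and \eqref{e:aug16iii} as formal consequences. The inclusion ``$\subseteq$'' in \eqref{e:aug16i} is the easy direction: since $\Id-\overline{R}=\sum_{i=1}^m\lambda_i(\Id-R_i)$, we get $\ran(\Id-\overline{R})\subseteq\sum_{i=1}^m\lambda_i\ran(\Id-R_i)$ pointwise, and taking closures gives one inclusion without any hypothesis. The real work is the reverse inclusion, and this is where I expect the main obstacle to lie. The natural approach is again to pass to monotone operators: write $R_i=\R{A_i}$ for maximally monotone $A_i$ (via \cite[Corollary~23.9~and~Proposition~4.4]{BC2017} applied to $\tfrac12(\Id+R_i)$), observe via \eqref{e:voulezvous} that $\tfrac12\ran(\Id-R_i)=\ran A_i$, and hope that $\Id-\overline R$ is, up to a positive scalar and translations, a reflected resolvent (or at least a displacement map) of some monotone operator built from the $A_i$ whose range is $\sum\lambda_i\ran A_i$.

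Concretely, I would try to show $0\in\cran(\Id-\overline R)$ whenever $0\in\overline{\sum_i\lambda_i\ran(\Id-R_i)}$, and then recover the general $y$ by the translation trick: replacing $A_i$ by $(-y/2+A_i)(\cdot)$ or similar shifts does not change $\ran(\Id-R_i)=2\ran A_i$, and shifts $\ran(\Id-\overline R)$ by a controlled vector (an analogue of \eqref{e:aug17ii} for convex combinations, which should follow from the same elementary identities $\R{-y+C}(x)=y+\R{C}(x+y)$ already recorded in the excerpt, now applied termwise inside $\overline R=\sum\lambda_i\R{A_i}$). So the crux reduces to: given sequences $(x_n,u_n^{(i)})\in\gra A_i$ with a common first coordinate $x_n$ and $\sum_i\lambda_i u_n^{(i)}\to 0$, produce a sequence $z_n\in\ran(\Id-\overline R)$ with $z_n\to 0$. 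This is the convex-combination counterpart of the \texttt{Claim} inside the proof of Lemma~\ref{l:Key}; I would expect the same Minty-parametrization bookkeeping to work, using that $\J{A_i}$ and $\J{A_i^{-1}}=\Id-\J{A_i}$ are nonexpansive so that the individual displacement terms are controlled by $\|u_n^{(i)}\|$, combined with convexity of the norm to bound $\|z_n\|$ by $\sum_i\lambda_i\|u_n^{(i)}\|$-type quantities. The point where I anticipate friction is arranging the sequences to share the \emph{same} $x_n$: for a single sum $A+B$ (Lemma~\ref{l:B-H}, Brezis--Haraux) this is exactly what $0\in\cran(A+B)$ delivers, and here one wants $0\in\cran(\sum_i\lambda_i A_i)$, which again follows from Brezis--Haraux provided at least one of the operators is single-valued with full domain — but in the convex-combination theorem \emph{no} averagedness is assumed, so this cocoercivity crutch is unavailable and a different, more hands-on argument is needed.

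An alternative route that sidesteps the reflected-resolvent machinery entirely, and which I would actually try first: work directly with the displacement maps $D_i:=\Id-R_i$ and $\overline D:=\Id-\overline R=\sum_i\lambda_i D_i$. Each $D_i$ is the displacement map of a nonexpansive operator, hence $2\J{A_i}-\Id=R_i$ is nonexpansive and $D_i=2(\Id-\J{A_i})$ is $2$-cocoercive-like (it is $\tfrac12$-cocoercive, being twice a firmly nonexpansive map minus... — more precisely $\tfrac12 D_i$ is firmly nonexpansive since it equals $\Id-\J{A_i}$). So $D_i$ is a cocoercive, in particular maximally monotone, operator with full domain, and $\overline D=\sum_i\lambda_i D_i$ is then automatically maximally monotone with full domain. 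Now $\cran(\ran\overline D)$ versus $\overline{\sum_i\lambda_i\ran D_i}$ is precisely a Brezis--Haraux-type statement for a finite sum of \emph{full-domain cocoercive} operators — and here every summand is single-valued with $\dom=X$ and \reckticity\ holds for each, so the hypotheses of the Brezis--Haraux theorem (in the form used in Lemma~\ref{l:B-H}, iterated) are satisfied for the whole sum at once, giving $\cran(\sum_i\lambda_i D_i)=\overline{\sum_i\lambda_i\ran D_i}$ directly. That is \eqref{e:aug16i}. Then \eqref{e:aug16ii} is the definition of $\mdv{\overline R}$, and \eqref{e:aug16iii} follows because $\sum_i\lambda_i\mdv{R_i}=\sum_i\lambda_i\Pj{\cran D_i}(0)\in\sum_i\lambda_i\cran D_i\subseteq\cran(\overline D)$, so the projection of $0$ onto that set is no longer than $\|\sum_i\lambda_i\mdv{R_i}\|$, and the final inequality is the triangle inequality plus $\sum_i\lambda_i=1$. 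Under this plan the main obstacle is essentially bookkeeping: verifying carefully that $\tfrac12 D_i$ is firmly nonexpansive (equivalently $D_i$ is $\tfrac12$-cocoercive), hence $D_i$ is maximally monotone and \reckti, so that Brezis--Haraux applies to the $\lambda_i$-weighted finite sum — after which \eqref{e:aug16i}–\eqref{e:aug16iii} drop out cleanly, with no translation gymnastics needed because the displacement maps, unlike the reflected resolvents, are already exactly the objects whose ranges we want to add.
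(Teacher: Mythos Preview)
Your ``alternative route'' is exactly the paper's proof: set $A_i:=\Id-R_i$ (your $D_i$), observe that each $A_i$ is maximally monotone and \reckti\ (the paper cites \cite[Example~20.29 and Example~25.20]{BC2017}, which amounts to your observation that $\tfrac12 D_i$ is firmly nonexpansive hence $D_i$ is cocoercive), then invoke the iterated Brezis--Haraux statement for sums of \reckti\ operators (the paper packages this as \cite[Lemma~3.1]{Vic2}) to get \eqref{e:aug16i}, after which \eqref{e:aug16ii} and \eqref{e:aug16iii} follow exactly as you describe. Your first route via reflected resolvents and translation tricks is unnecessary here, and the paper does not pursue it.
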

\begin{proof}
Set $A_i := \Id-R_i$ for each $i\in\{1,\ldots,m\}$.
By \cite[Example~20.29 and Example~25.20]{BC2017},
$A_i$ is maximally and \reckti.
By \cite[Lemma~3.1]{Vic2},
\begin{align*}
  \cran(\Id-\overline{R})
  &= \cran\big(\textstyle\sum_{i=1}^m\lambda_i(\Id-R_i)\big)
  = \cran\big(\textstyle\sum_{i=1}^m\lambda_iA_i\big)
  =\overline{\sum_{i=1}^m\lambda_i\ran A_i}\\
  &=\overline{\textstyle\sum_{i=1}^m\lambda_i\ran(\Id-R_i)}.
\end{align*}
This yields \eqref{e:aug16i} and thus \eqref{e:aug16ii}.
In view of \eqref{e:aug16i}, we have
\begin{equation}
\textstyle \sum_{i=1}^m \lambda_i\mdv{R_i}
\in\sum_{i=1}^m \lambda_i\cran(\Id-R_i)
\subseteq \overline{\sum_{i=1}^m \lambda_i\ran(\Id-R_i)}
=\cran(\Id-\overline{R}).
\end{equation}
Thus $\|\mdv{\overline{R}}\|\leq\|\sum_{i=1}^m\|\lambda_i\mdv{R_i}\|$
and \eqref{e:aug16iii} follows. 
\end{proof}

\begin{remark}
  \cite[Example~3.4 and Example~3.5]{Vic2} illustrate that
  the inequalities in \eqref{e:aug16iii} are sharp and also
  that in general $\mdv{\overline{R}} \neq \sum_{i=1}^m\lambda_i\mdv{R_i}$.
\end{remark}

We conclude with an application that extends 
\cite[Theorem~5.5]{Vic1} and \cite[Corollary~3.3]{Vic2}, 
where each $\mdv{R_i}$ was equal to $0$:

\begin{corollary}
Let $m\in\{2,3,\ldots\}$, let $R_1,\ldots,R_m$ be
nonexpansive on $X$, and let $\lambda_1,\ldots,\lambda_m$
be in $\left]0,1\right[$ such that
$\sum_{i=1}^{m}\lambda_i=1$. 
Set $\overline{R}=\sum_{i=1}^m \lambda_iR_i$ 
and assume that $\sum_{i=1}^m\lambda_i\mdv{R_i}=0$.
Then $\mdv{\bar{R}}=0$.
\end{corollary}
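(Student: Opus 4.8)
The plan is to derive this corollary as an immediate consequence of Theorem~\ref{thm:main:convc}, which gives the exact formula \eqref{e:aug16iii}. First I would recall that the hypothesis $\sum_{i=1}^m\lambda_i\mdv{R_i}=0$ feeds directly into the first inequality of \eqref{e:aug16iii}, namely $\|\mdv{\overline{R}}\|\leq\|\sum_{i=1}^m\lambda_i\mdv{R_i}\|$. Since the right-hand side is $\|0\|=0$, we conclude $\|\mdv{\overline{R}}\|=0$, hence $\mdv{\overline{R}}=0$.

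More precisely, the argument is: by Theorem~\ref{thm:main:convc} applied to $R_1,\ldots,R_m$ and $\lambda_1,\ldots,\lambda_m$, the chain
\begin{equation*}
\textstyle\sum_{i=1}^m\lambda_i\mdv{R_i}\in\sum_{i=1}^m\lambda_i\cran(\Id-R_i)\subseteq\overline{\sum_{i=1}^m\lambda_i\ran(\Id-R_i)}=\cran(\Id-\overline{R})
\end{equation*}
holds, so that $\mdv{\overline{R}}=P_{\cran(\Id-\overline{R})}(0)$ has norm at most $\|\sum_{i=1}^m\lambda_i\mdv{R_i}\|=0$. Therefore $\mdv{\overline{R}}=0$, as claimed.

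There is no real obstacle here: the corollary is a direct specialization of \eqref{e:aug16iii} to the case where the convex combination of the individual minimal displacement vectors vanishes. The only thing to note is that this genuinely extends \cite[Theorem~5.5]{Vic1} and \cite[Corollary~3.3]{Vic2}, since those references required each $\mdv{R_i}=0$ individually (and firm nonexpansiveness), whereas here we only need the weaker condition $\sum_{i=1}^m\lambda_i\mdv{R_i}=0$ together with mere nonexpansiveness of each $R_i$ — cancellation among the $\mdv{R_i}$ is permitted.
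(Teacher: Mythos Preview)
Your proposal is correct and takes essentially the same approach as the paper: the paper's proof is simply ``Clear from \eqref{e:aug16iii},'' and your argument is exactly the unpacking of that one-line reference. Your additional recap of the inclusion chain and the remark about the extension over \cite{Vic1,Vic2} are fine but not needed for the proof itself.
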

\begin{proof}
Clear from \eqref{e:aug16iii}. 
\end{proof}

\section*{Acknowledgments}
The research of HHB was partially supported by a Discovery Grant
of the Natural Sciences and Engineering Research Council of
Canada. 
The research of WMM was partially supported by 
the Natural Sciences and Engineering Research Council of
Canada Postdoctoral Fellowship.

\end{document}